\theoremstyle{plain}
\newtheorem{theorem}{Theorem}[section]
\newtheorem{lemma}[theorem]{Lemma}
\newtheorem{corollary}[theorem]{Corollary}
\theoremstyle{remark}
\theoremstyle{definition}
\newcommand{\R}{\mathbb R}
\newcommand{\e}{\epsilon}
\DeclareMathOperator*{\argmin}{arg\,min}
\numberwithin{equation}{section} 
\begin{document}

\title{\bf\Large Large Deviations for Stochastic Differential Equations Driven by Heavy-tailed L\'evy Processes }
\author{\bf\normalsize{
Wei Wei$^{1,}$\footnote{Email: \texttt{weiw16@hust.edu.cn}},
Qiao Huang$^{2,}$\footnote{Email: \texttt{qhuang@fc.ul.pt}},
Jinqiao Duan$^{3,}$\footnote{Email: \texttt{duan@iit.edu}}
} \\[10pt]
\footnotesize{$^1$Center for Mathematical Sciences, Huazhong University of Science and Technology,} \\
\footnotesize{Wuhan, Hubei 430074, China.} \\[5pt]
\footnotesize{$^2$Department of Mathematics, Faculty of Sciences, University of Lisbon,}\\
\footnotesize {Campo Grande, Edif\'{\i}cio C6, PT-1749-016 Lisboa, Portugal.} \\[5pt]
\footnotesize{$^3$Department of Applied Mathematics, Illinois Institute of Technology, Chicago, IL 60616, USA.}
}

\date{}
\maketitle
\vspace{-0.3in}

\begin{abstract}
 We obtain sample-path large deviations for a class of one-dimensional stochastic differential equations with bounded drifts and heavy-tailed L\'evy processes. These heavy-tailed L\'evy processes do not satisfy the exponential integrability condition, which is a common restriction on the L\'evy processes in existing large deviations contents. We further prove that the solution processes satisfy a weak large deviation principle with a discrete rate function and logarithmic speed. We also show that they do not satisfy the full large deviation principle.
  \bigskip\\
  \textbf{AMS 2020 Mathematics Subject Classification:} 60H10, 60F10, 60J76. \\
  \textbf{Keywords and Phrases:} Large Deviations, Non-Gaussian Noise, Stochastic Differential Equations, Heavy-tailed L\'evy Processes.
\end{abstract}

\section{Introduction}

Stochastic differential equations (SDEs) driven by Brownian motion have proven their power in many fields \cite{Henry,Oksendal,Skorokhod,DuanS}.
However, random fluctuations in complex systems in science are often non-Gaussian \cite{Woy, Del1, Del2}. Especially, the $\alpha$-stable L\'evy motions are thought to be a good substitution of Brownian motion.

We will investigate the large deviations for a one-dimensional SDE driven by a class of heavy-tailed L\'evy processes,
\begin{equation}\label{SDE}
  \begin{cases}
  dY_t^\epsilon = b(Y_t^\epsilon)dt+\epsilon d L_t^\epsilon, & \quad t \in (0,1], \\
  Y_0^\epsilon = 0, & 
  \end{cases}
\end{equation}
where
\[
\epsilon L_t^\epsilon=\sqrt{\epsilon}B_t+ \epsilon \int_0^t\int_{\mathbb{R}\backslash \{0\}}z \widetilde{N}^{\frac{1}{\epsilon}}(ds,dz),
\]
and $\widetilde{N}^{\frac{1}{\epsilon}}$ is a compensated Poisson random measure defined on a given complete probability space $(\Omega,\mathcal{F}, \mathbb{P})$, with compensator $\epsilon^{-1}ds \otimes \nu$. The measure $\nu$ is a L\'evy measure that will be specified latter.

There have been active studies and applications of the dynamical behaviors of such systems. The most probable transition path \cite{ Weinan, Wan}, the mean exit time \cite{Ting, Imkeller} and so forth are used in identifying these behaviors. The large deviation principle has become a massive tool in understanding these deterministic properties as is shown in Freidlin and Wentzell \cite{FW}. Roughly speaking, the large deviation principle deals with the identification of asymptotic exponential decay rate of probabilities. In the classical framework, a sequence of random elements $\{X_n\}_{n \geq 1}$ valued in some Polish space $\mathcal{E}$ is said to satisfy the large deviation principle with rate function $I$ and speed $k(n)$, if
 \[
   -\inf_{x \in A^\circ} I(x) \leq \liminf_{n \to \infty} \frac{\log \mathbb{P}(X_n \in A)}{k(n)} \leq \limsup_{n \to \infty} \frac{\log \mathbb{P}(X_n \in A)}{k(n)} \leq -\inf_{x \in \bar{A}} I(x)
 \]
for every Borel measurable set $A$ in $\mathcal{E}$.

Many works are about identifying whether the solution processes of SDEs driven by Brownian motion satisfy a large deviation principle both in finite dimension and in infinite dimension \cite{Demb, Dupuis,Budhiraja08}. Recently, Budhiraja et al.~\cite{Budhiraja11,Budhiraja13} have obtained a large deviation principle for SDEs driven by random Poisson measures from finite dimensional settings to infinite dimensional settings, using the weak convergence approach. The relations between uniformly exponential tightness and the large deviation principle also give rise to results on SDEs driven by semimartingales \cite{Ganguly18, Qiao}.

However, an \emph{exponential integrability condition} on the L\'evy measure $\nu $ is unavoidable in these works, which is
\[
\int_{\mathbb{R}^d}\textrm{e}^{\lambda |x|} \nu(dx)<\infty, \quad \textrm{for every } \lambda>0.
\]
Note that the $\alpha$-stable L\'evy processes do not satisfy this condition, and thus the exponentially light tailed L\'evy processes are alternatively used as the driven noise in some articles investigating large deviations principles of SDEs \cite{Gomes}.
In the classical approach to obtain large deviations principles, this condition is necessary because their rate functions are determined by the Laplace transform of the processes through the Legendre transform.

In the present paper, we only require the L\'evy processes to have regularly varying L\'evy measure $\nu$, that is,
\begin{equation}\label{Connu}
  \nu([x,\infty)) = L_+(x)x^{-\alpha}, \quad \nu((-\infty,-x]) = L_-(x)x^{-\beta}, \quad \forall x>0.
\end{equation}
$L_+$ and $L_-$ are two slowly varying functions which means $L_\pm >0$ and $\lim_{u \to +\infty} L_\pm(\lambda u )/L_\pm(u)=1$ for every $\lambda >0$. We assume the constants $\alpha,\beta >1$ in this paper.

We can see that the $\alpha$-stable L\'evy processes do not fit in the classical large deviations framework. So, recently, Rhee et al.~\cite{CH19} have proved a large deviation result on such one-dimensional regularly varying L\'evy processes by the $\mathbb{M}$-convergence \cite{Lindskog}. They also obtained a weak large deviation principle in the classical framework and deduced that a full large deviation principle does not hold by a counterexample. At first glance, the large deviations for SDEs driven by such L\'evy processes may seem to be an immediate consequence of the \emph{Contraction Principle}. However, as the rate function they obtained is not good, the \emph{Contraction Principle} is is no longer applicable in this case. Moreover, as their results are obtained in the Skorokhod space, the solution mapping should be verified to be continuous under the Skorokhod metric. As an intermediate step, a ``bounded away'' condition requires a careful treatment as well.

This paper is organized as follows. After an introduction to basic notions and results obtained by Rhee et al. in Section \ref{pLDP}, we present our main results in Section \ref{sLDP}. Theorem \ref{LDP} shows that a kind of large deviations estimates hold for every bounded measurable sets in the Skorokhod space. Theorem \ref{wLDP} presents a weak large deviation principle. In subsection \ref{nexist}, we show the full large deviation principle does not hold. We discuss our results and its further applications in Section \ref{Dis}. The proofs to three key lemmas are left in Appendix.

\section{Large Deviations for L\'evy processes}\label{pLDP}
This section reviews general concepts in the large deviation theory, and recalls useful results of Rhee et al.~\cite{CH19}. These results are the groundwork for deriving our main theorem.

Let $\mathbb{D}=\mathbb{D}([0,1])$ be the Skorokhod space on $[0,1]$, that is, the space of real-valued right continuous functions with left limits. We use the usual Skorokhod $J_1$ metric $d(x,y)=\inf_{\lambda\in \Lambda}\|\lambda-e \| \vee \|x \circ \lambda - y \|$, where $ \Lambda$ denotes the set of increasing homeomorphisms from $[0,1]$ to itself, $e$ is the identity and $\|\cdot\|$ is the uniform norm. We say that a set $A \subset \mathbb{D}$ is bounded away from another set $B \subset \mathbb{D}$ if $\inf_{x\in A, y \in B}d(x,y)>0$.

Let $\mathcal{G}$ be the Borel $\sigma$-algebra on $(\mathbb{D},d)$. Given a closed subset $\mathbb{S}$ of $\mathbb{D}$, let $\mathbb{D} \backslash \mathbb{S}$ be equipped with the relative topology as a subspace of $\mathbb{D}$. Consider the associated Borel $\sigma$-algebra $\mathcal{G}_{\mathbb{D}\backslash \mathbb{S}}$ on $\mathbb{D} \backslash \mathbb{S}$. Then it is easy to see that $\mathcal{G}_{\mathbb{D}\backslash \mathbb{S}}$ is just the restriction of $\mathcal{G}$ on $\mathbb{D} \backslash \mathbb{S}$, i.e., $\mathcal{G}_{\mathbb{D}\backslash \mathbb{S}} = \{A \in \mathcal{G}: A \subset \mathbb{D}\backslash \mathbb{S} \}$. Denote $\mathbb{S}^r \triangleq \{ x \in \mathbb{D}: d(x,\mathbb{S})<r \}$ for $r>0$. Let $\mathbb{M}(\mathbb{D}\backslash \mathbb{S})$ be the class of measures defined on $ \mathcal{G}_{\mathbb{D}\backslash \mathbb{S}}$ whose restrictions to $\mathbb{D}\backslash \mathbb{S}^r$ are finite for all $r>0$. Topologize $\mathbb{M}(\mathbb{D}\backslash \mathbb{S})$ with a subbasis $\{ \{\mu \in \mathbb{M}(\mathbb{D}\backslash \mathbb{S}): \mu(f) \in G\}: f \in \mathcal{C}_{\mathbb{D}\backslash \mathbb{S}}, G \textrm{ open in } \mathbb{R}_+\}$ where $\mathcal{C}_{\mathbb{D}\backslash \mathbb{S}}$ is the set of real-valued, non-negative, bounded, continuous functions whose supports are bounded away from $\mathbb{S}$.

Let $\mathbb{D}_{l,m}$ be the subspace of $\mathbb{D}([0,1])$ consisting of step functions that vanish at the origin with $l$ upward jumps and $m$ downward jumps. For given $\alpha,\beta>1$ and $(j,k)\in \mathbb{Z}^2_+$, define $\mathbb{D}_{<j,k} \triangleq \cup_{(l,m)\in \mathbb{I}_{<j,k}} \mathbb{D}_{l,m}$, where $\mathbb{I}_{<j,k} \triangleq \{ (l,m)\in \mathbb{Z}^2_+\backslash \{(j,k)\}: (\alpha-1)l+(\beta-1)m \leq (\alpha-1)j+(\beta-1)k\}$ and $\mathbb{Z}_+$ denotes the set of non-negative integers.

Consider a scaled one-dimensional L\'evy process $\overline{L_t^n}$,
\[
  \overline{L^n_t} = \frac{1}{n} B(nt)+ \frac{1}{n}\int_{\mathbb{R}\backslash \{ 0\}} z \widetilde{N}([0,nt]\times dz),
\]
where $\widetilde{N}$ is a compensated Poisson random measure defined on a given complete probability space $(\Omega,\mathcal{F}, \mathbb{P})$ with compensator $ds \otimes \nu$. Here, L\'evy measure $\nu$ satisfies equations (\ref{Connu}).

Denote $C_{0,0}\triangleq \delta_{\mathbf{0}}(\cdot)$, the Dirac measure concentrated on the zero function. Let $\nu_\alpha(x,\infty) \triangleq x^{-\alpha} $ be a measure concentrated on $(0,\infty)$. For each $(j,k) \in \mathbb{Z}^2_+ \backslash \{(0,0) \}$, let $\nu_\alpha^j$ and $\nu_\beta^k$ be respectively the $j$-fold product of $\nu_\alpha$ and the $k$-fold product of $\nu_\beta$. Define a measure $C_{j,k} \in \mathbb{M}(\mathbb{D}\backslash \mathbb{D}_{<j,k})$ concentrated on $\mathbb{D}_{j,k}$ as
\[\textstyle{C_{j,k}(\cdot) \triangleq \mathbb{E}\left[ (\nu_\alpha^j \times \nu_\beta^k) \left( \{ (x,y) \in (0,\infty)^j \times (0,\infty)^k: \sum_{i=1}^j x_i 1_{[U_i,1]}-\sum_{i=1}^k y_i 1_{[V_i,1]} \in \cdot\} \right)\right],}
\]
where $U_i$'s and $V_i$'s are i.i.d. random variables uniformly distributed on $[0,1]$.

Define $\mathcal{I}(j,k) \triangleq (\alpha-1)j+(\beta-1)k$. For each $A \in \mathcal{G}$, consider a pair of integers $ (\mathcal{J}(A),\mathcal{K}(A))$ such that
\begin{equation} \label{CJK}
  (\mathcal{J}(A),\mathcal{K}(A)) \in \argmin \limits_{ \begin{subarray}{c}  (j,k)\in\mathbb{Z}_+^2, \\ \mathbb{D}_{j,k}\cap A \neq \emptyset\end{subarray}} \mathcal{I}(j,k).
\end{equation}

Rhee et al.~proved the following large-deviation theorem for $\overline{L^n}$.
\begin{theorem}[{\cite[Theorem 3.4]{CH19}}]\label{CH-LDP}
  Suppose that $A$ is a measurable set in $\mathcal{G}$. If the argument minimum in (\ref{CJK}) is non-empty and $A$ is bounded away from $\mathbb{D}_{<\mathcal{J}(A),\mathcal{K}(A)}$, the argument minimum is then unique and
  \begin{align*}
    \liminf_{n \to \infty} \frac{\mathbb{P}(\overline{L^n}\in A)}{(n \nu[n,\infty))^{\mathcal{J}(A)}(n \nu(-\infty,-n])^{\mathcal{K}(A)}} & \geq C_{\mathcal{J}(A),\mathcal{K}(A)}(A^\circ), \\
    \limsup_{n \to \infty} \frac{\mathbb{P}(\overline{L^n} \in A)}{(n\nu[n,\infty))^{\mathcal{J}(A)}(n\nu(-\infty,-n])^{\mathcal{K}(A)}} & \leq C_{\mathcal{J}(A),\mathcal{K}(A)}(\bar{A}).
  \end{align*}
  Moreover, if the argument minimum in (\ref{CJK}) is empty and $A$ is bounded away from $\mathbb{D}_{<l,m} \cup \mathbb{D}_{l,m} $ for some $ (l,m) \in \mathbb{Z}_+^2 \backslash \{(0, 0)\}$, we have
  \begin{equation*}
    \lim_{n \to \infty} \frac{\mathbb{P}(\overline{L^n}\in A)}{(n \nu[n,\infty))^l (n\nu(-\infty,-n])^m} =0.
  \end{equation*}
\end{theorem}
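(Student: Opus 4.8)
\noindent\emph{Plan of proof.} The plan is to deduce everything from a single $\mathbb{M}$-convergence limit theorem for the laws of $\overline{L^n}$: I would first prove that for each $(j,k)\in\mathbb{Z}_+^2$,
\[
\mu_n^{j,k}(\cdot)\ \triangleq\ \frac{\mathbb{P}(\overline{L^n}\in\,\cdot\,)}{\big(n\nu[n,\infty)\big)^{j}\big(n\nu(-\infty,-n]\big)^{k}}\ \longrightarrow\ C_{j,k}(\cdot)\quad\text{in }\ \mathbb{M}(\mathbb{D}\backslash\mathbb{D}_{<j,k}).
\]
Given this, the two displayed inequalities are exactly the portmanteau inequalities for $\mathbb{M}$-convergence applied at level $(j,k)=(\mathcal{J}(A),\mathcal{K}(A))$, using that if $A$ is bounded away from $\mathbb{D}_{<\mathcal{J}(A),\mathcal{K}(A)}$ then so are $A^\circ$ and $\bar A$ (closures stay bounded away), so that $\liminf_n\mu_n^{\mathcal{J}(A),\mathcal{K}(A)}(A^\circ)\ge C_{\mathcal{J}(A),\mathcal{K}(A)}(A^\circ)$ for the open set $A^\circ$ and $\limsup_n\mu_n^{\mathcal{J}(A),\mathcal{K}(A)}(\bar A)\le C_{\mathcal{J}(A),\mathcal{K}(A)}(\bar A)$ for the closed set $\bar A$. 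Uniqueness of the argument minimum is then immediate: any other minimiser $(j',k')$ of $\mathcal{I}$ lies in $\mathbb{I}_{<\mathcal{J}(A),\mathcal{K}(A)}$, hence $\mathbb{D}_{j',k'}\subseteq\mathbb{D}_{<\mathcal{J}(A),\mathcal{K}(A)}$, which together with $\mathbb{D}_{j',k'}\cap A\ne\emptyset$ contradicts $A$ being bounded away from $\mathbb{D}_{<\mathcal{J}(A),\mathcal{K}(A)}$. For the last assertion I would apply the same limit theorem at level $(l,m)$: $A$, hence $\bar A$, is bounded away from $\mathbb{D}_{<l,m}$ and also from $\mathbb{D}_{l,m}$, on which $C_{l,m}$ is concentrated, so $C_{l,m}(\bar A)=0$ and the closed-set portmanteau inequality forces $\mu_n^{l,m}(A)\to0$.

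So the substance is the $\mathbb{M}$-convergence, and the route I would take is the ``principle of several big jumps''. Fix $\delta>0$ and decompose $\overline{L^n}=\overline{L^{n,\delta}}+\overline{R^{n,\delta}}$, where $\overline{L^{n,\delta}}_t=\tfrac1n\int_{|z|>n\delta}z\,N([0,nt]\times dz)$ keeps the rescaled jumps of modulus exceeding $\delta$, and $\overline{R^{n,\delta}}$ is the remainder (Brownian part $\tfrac1n B(n\cdot)$, the compensated sub-$\delta$ jumps, and the compensator of the large jumps), a centred process with jumps bounded by $\delta$. Step one: a Bernstein-type deviation bound for processes with bounded jumps controls $\mathbb{P}(\|\overline{R^{n,\delta}}\|_\infty>\eta)$, and since $d(x+z,x)\le\|z\|_\infty$ in the $J_1$ metric, $\overline{R^{n,\delta}}$ is asymptotically irrelevant on events bounded away from $\mathbb{D}_{<j,k}$ once $n\to\infty$ and then $\delta\downarrow0$. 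Step two: conditionally on $\overline{L^{n,\delta}}$ having exactly $j$ up-jumps and $k$ down-jumps of rescaled modulus above $\delta$, it equals $\sum_{i=1}^{j}\xi_i\mathbf 1_{[U_i,1]}-\sum_{i=1}^{k}\eta_i\mathbf 1_{[V_i,1]}$ with $U_i,V_i$ i.i.d.\ uniform on $[0,1]$ and $\xi_i,\eta_i$ independent with laws obtained by rescaling $\nu$ on $(n\delta,\infty)$ and on $(-\infty,-n\delta)$; regular variation of $\nu$ gives that the expected number of such up-jumps is $\sim\delta^{-\alpha}\,n\nu[n,\infty)$, that of down-jumps $\sim\delta^{-\beta}\,n\nu(-\infty,-n]$, and that the rescaled jump-size laws converge to the normalised restrictions of $\nu_\alpha$ and $\nu_\beta$ to $(\delta,\infty)$, while configurations with more than $j+k$ large jumps contribute at a strictly higher power of the normalising sequence. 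Putting the Poisson structure of counts and (uniform) times together with these jump-size limits gives $\mu_n^{j,k}(f)$ converging to the $\delta$-truncated analogue of $C_{j,k}(f)$ for each $f\in\mathcal{C}_{\mathbb{D}\backslash\mathbb{D}_{<j,k}}$, and letting $\delta\downarrow0$ — legitimate because $\operatorname{supp}f$ is bounded away from $\mathbb{D}_{<j,k}$, so only finitely many truncation levels ever enter — yields $\mu_n^{j,k}(f)\to C_{j,k}(f)$, i.e.\ the claimed $\mathbb{M}$-convergence.

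I expect the main obstacle to be step two: making the heuristic rigorous in the $J_1$ topology while simultaneously showing that the remainder $\overline{R^{n,\delta}}$ and the ``too many intermediate jumps'' events are of genuinely lower order relative to $(n\nu[n,\infty))^{j}(n\nu(-\infty,-n])^{k}$, uniformly enough to exchange the $n\to\infty$ and $\delta\downarrow0$ limits, and identifying the exact combinatorial weights in $C_{j,k}$ (the symmetrisation over which jumps are ``large'' and the product structure $\nu_\alpha^{j}\times\nu_\beta^{k}$). The hypothesis that $A$ be bounded away from $\mathbb{D}_{<\mathcal{J}(A),\mathcal{K}(A)}$ is what keeps $A$ off the locus where $C_{\mathcal{J}(A),\mathcal{K}(A)}$ ceases to be a finite measure, and hence is exactly what makes the portmanteau step legitimate.
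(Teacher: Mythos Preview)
The paper does not prove this statement at all: Theorem~\ref{CH-LDP} is quoted verbatim from \cite[Theorem~3.4]{CH19} as background, and the present paper's contributions begin only in Section~\ref{sLDP}, where this theorem is used as a black box (applied to $F^{-1}(A)$ in the proof of Theorem~\ref{LDP}). So there is no ``paper's own proof'' to compare against.

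That said, your plan is a faithful outline of how the result is actually established in the cited source \cite{CH19}: the $\mathbb{M}(\mathbb{D}\backslash\mathbb{D}_{<j,k})$-convergence $\mu_n^{j,k}\to C_{j,k}$ is exactly their core limit theorem, and the portmanteau deduction together with the ``principle of several big jumps'' decomposition (isolate jumps of rescaled size $>\delta$, show the remainder is negligible via concentration for processes with bounded jumps, use regular variation to identify the limiting jump-size laws, then let $\delta\downarrow0$) is precisely their strategy. Your identification of the delicate point --- controlling the remainder and the ``too many intermediate jumps'' events uniformly enough to swap the $n\to\infty$ and $\delta\downarrow0$ limits in the $J_1$ topology --- is also accurate; in \cite{CH19} this is handled through a careful asymptotic-equivalence argument and explicit Poisson-thinning combinatorics. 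If your intent was to reproduce the proof of the cited theorem rather than to use it, you are on the right track, but be aware that a full execution is lengthy and is not part of what the present paper claims or needs.
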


Define a rate function $I: \mathbb D\to [0,\infty]$ as follows (it is indeed a rate function as shown in \cite{CH19}),
\begin{equation*}
  I(\xi) \triangleq
  \begin{cases}
    (\alpha-1)\mathcal{D}_+(\xi)+(\beta-1)\mathcal{D}_-(\xi), & \textrm{if }\xi \textrm{ is a step function and } \xi(0)=0, \\
    \infty, & \textrm{otherwise},
  \end{cases}
\end{equation*}
where $\mathcal{D}_+(\xi)$ counts the upward jumps of $\xi$ and $\mathcal{D}_-(\xi)$ counts the downward jumps of $\xi$.

Based on Theorem \ref{CH-LDP}, Rhee et al. also proved the following weak large deviation principle for $\overline{L^n}$.
\begin{theorem}[{\cite[Theorem 4.2]{CH19}}]\label{CH-WLDP}
  The scaled process $\overline{L^n}$ satisfies the weak large deviation principle with rate function $I$ and speed $\log n$, i.e.,
  \begin{align*}
     \liminf_{n \to \infty} \frac{\log \mathbb{P}(\overline{L^n} \in G)}{\log n} & \geq -\inf_{x \in G} I(x),\\
     \limsup_{n \to \infty} \frac{\log \mathbb{P}(\overline{L^n} \in K)}{\log n} & \leq -\inf_{x \in K} I(x),
  \end{align*}
  for every open set $G$ and compact set $K$ in $\mathbb{D}$.
\end{theorem}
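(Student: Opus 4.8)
\textbf{Proof proposal for Theorem \ref{CH-WLDP}.}

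The plan is to extract the weak large deviation principle from the sharp (``$\mathbb{M}$-convergence'') asymptotics of Theorem \ref{CH-LDP} by passing to logarithmic scale. The basic observation is that $n\nu[n,\infty)=L_+(n)\,n^{-(\alpha-1)}$ and $n\nu(-\infty,-n]=L_-(n)\,n^{-(\beta-1)}$, so, since $\log L_\pm(n)/\log n\to 0$ for slowly varying $L_\pm$ (e.g.\ by Potter's bounds), one has $\log(n\nu[n,\infty))/\log n\to-(\alpha-1)$ and $\log(n\nu(-\infty,-n])/\log n\to-(\beta-1)$. Hence, after taking $\log$ and dividing by $\log n$, every factor $(n\nu[n,\infty))^{j}(n\nu(-\infty,-n])^{k}$ contributes precisely $-\mathcal{I}(j,k)$, while any fixed strictly positive constant contributes $0$.

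For the lower bound, fix an open set $G$; it suffices to show $\liminf_n\log\mathbb{P}(\overline{L^n}\in G)/\log n\ge-I(\xi)$ for each $\xi\in G$ with $I(\xi)<\infty$ and then take the supremum over such $\xi$. Such a $\xi$ belongs to some $\mathbb{D}_{l,m}$ with $\mathcal{I}(l,m)=I(\xi)$ and has $l$ upward and $m$ downward jumps, all of strictly positive size, at distinct times. First I would pick $\delta>0$ so small that the open ball $B(\xi,\delta)\subseteq G$ and $B(\xi,\delta)$ is bounded away from $\mathbb{D}_{<l,m}$; this is possible because $d(\xi,\mathbb{D}_{<l,m})>0$, as $\mathbb{D}_{<l,m}$ is a finite union of strata $\mathbb{D}_{j,k}$ each of which omits at least one upward or downward jump of $\xi$ and hence sits at positive $J_1$-distance from $\xi$. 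Then Theorem \ref{CH-LDP} applies to $A=B(\xi,\delta)$ with $(\mathcal{J}(A),\mathcal{K}(A))=(l,m)$ (this pair is the unique argument minimum, since any other pair with $\mathcal{I}\le\mathcal{I}(l,m)$ lies in $\mathbb{D}_{<l,m}$, away from $A$) and gives $\liminf_n\mathbb{P}(\overline{L^n}\in A)/\big((n\nu[n,\infty))^{l}(n\nu(-\infty,-n])^{m}\big)\ge C_{l,m}(A)$. One checks $C_{l,m}(A)>0$: by its definition $C_{l,m}$ is built from the product measure $\nu_\alpha^{l}\times\nu_\beta^{m}$ on the jump sizes together with uniformly distributed jump times, and since $\nu_\alpha,\nu_\beta$ have strictly positive densities on $(0,\infty)$ the configurations producing a step function lying in $B(\xi,\delta)$ form a set of positive measure. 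Taking $\log$ and dividing by $\log n$ gives $\liminf_n\log\mathbb{P}(\overline{L^n}\in G)/\log n\ge-l(\alpha-1)-m(\beta-1)=-I(\xi)$.

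For the upper bound, fix a compact set $K$ and put $\rho=\inf_{x\in K}I(x)$. If $\rho=0$ the bound is trivial since $\mathbb{P}\le1$; so assume $\rho>0$, and note that, $I$ being lower semicontinuous and $K$ compact, $\rho$ is attained, hence $\rho=\mathcal{I}(l_0,m_0)$ for some $(l_0,m_0)\neq(0,0)$. I would cover $K$ by finitely many small open balls $B(x_i,r_i)$ with $x_i\in K$ and estimate each. If $I(x_i)<\infty$, so $x_i\in\mathbb{D}_{l,m}$ with $\mathcal{I}(l,m)\ge\rho$, then taking $r_i$ small enough that $\overline{B(x_i,r_i)}$ is bounded away from $\mathbb{D}_{<l,m}$, the $\limsup$ half of Theorem \ref{CH-LDP} (with argument minimum $(l,m)$ and $C_{l,m}(\overline{B(x_i,r_i)})<\infty$) gives $\limsup_n\log\mathbb{P}(\overline{L^n}\in B(x_i,r_i))/\log n\le-\mathcal{I}(l,m)\le-\rho$. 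If $I(x_i)=\infty$ and $x_i(0)\neq0$, then for $r_i<|x_i(0)|$ every $y\in B(x_i,r_i)$ has $y(0)\neq0$, so $\mathbb{P}(\overline{L^n}\in B(x_i,r_i))=0$. In the remaining case $I(x_i)=\infty$, $x_i(0)=0$, I would choose $r_i$ so small that $B(x_i,r_i)$ is bounded away from $\mathbb{D}^{\le\rho}:=\bigcup_{\mathcal{I}(j,k)\le\rho}\mathbb{D}_{j,k}=\mathbb{D}_{<l_0,m_0}\cup\mathbb{D}_{l_0,m_0}$ — legitimate once one knows $\mathbb{D}^{\le\rho}$ is closed and consists only of step functions of finite rate $\le\rho$, so $x_i\notin\mathbb{D}^{\le\rho}$ forces $d(x_i,\mathbb{D}^{\le\rho})>0$ — and conclude: if the argument minimum in (\ref{CJK}) for $B(x_i,r_i)$ is empty, the second part of Theorem \ref{CH-LDP} with $(l_0,m_0)$ yields $\mathbb{P}(\overline{L^n}\in B(x_i,r_i))=o\big((n\nu[n,\infty))^{l_0}(n\nu(-\infty,-n])^{m_0}\big)$, whence $\limsup_n\log\mathbb{P}/\log n\le-\rho$; and if it is non-empty it has value $>\rho$, so (after possibly shrinking $r_i$ so that the ball is also bounded away from $\mathbb{D}_{<\mathcal{J},\mathcal{K}}$) the first part of Theorem \ref{CH-LDP} gives $\limsup_n\log\mathbb{P}/\log n\le-\mathcal{I}(\mathcal{J},\mathcal{K})<-\rho$. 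Finally $\mathbb{P}(\overline{L^n}\in K)\le\sum_i\mathbb{P}(\overline{L^n}\in B(x_i,r_i))$ together with $\log\sum_i\le\log(\#\{i\})+\max_i\log$ completes the proof.

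The hard part is exactly this last case of the upper bound. Since any path with $x(0)=0$ lies in the $J_1$-closure of $\bigcup_{l,m}\mathbb{D}_{l,m}$, no neighborhood of such an $x$ avoids all strata, so the ``empty argument minimum'' clause of Theorem \ref{CH-LDP} is not directly available. To make the argument rigorous I would first record two structural lemmas: (i) each $\mathbb{D}_{<j,k}$, and more generally $\mathbb{D}^{\le c}=\bigcup_{\mathcal{I}(j,k)\le c}\mathbb{D}_{j,k}$, is closed in $\mathbb{D}$ and is a finite union of classes of step functions of rate $\le c$; and (ii) a path of infinite rate is at strictly positive $J_1$-distance from every $\mathbb{D}^{\le c}$. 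The genuinely delicate point is that when $(\alpha-1)/(\beta-1)\in\mathbb{Q}$ several pairs $(j,k)$ may share a value $\mathcal{I}(j,k)$, so a small ball's argument minimum can fail to be unique and the ``bounded away from $\mathbb{D}_{<\mathcal{J},\mathcal{K}}$'' hypothesis can fail; I would handle this either by splitting the ball according to which rate-level stratum it approaches, or — cleaner — by bypassing Theorem \ref{CH-LDP} for these neighborhoods altogether, decomposing $\overline{L^n}$ into its large jumps and a uniformly $o(1)$ remainder and observing that $\overline{L^n}\in B(x_i,r_i)$ forces strictly more than $\rho$ ``units'' of large jumps, an event whose probability is $O(n^{-\rho'})$ for every $\rho'$ below the next admissible rate level, by elementary Poisson tail bounds; this already yields $\limsup_n\log\mathbb{P}(\overline{L^n}\in B(x_i,r_i))/\log n\le-\rho$.
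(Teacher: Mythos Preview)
The paper does not prove Theorem~\ref{CH-WLDP}. It is stated in Section~\ref{pLDP} purely as a preliminary result quoted from Rhee, Blanchet and Zwart~\cite[Theorem~4.2]{CH19}, alongside Theorem~\ref{CH-LDP}; the paper then uses it as a black box in the proof of Theorem~\ref{wLDP}. There is therefore no ``paper's own proof'' to compare your proposal against.

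For what it is worth, your sketch is a sensible route to deriving the weak LDP from the sharp asymptotics of Theorem~\ref{CH-LDP}, and is in the spirit of how \cite{CH19} proceeds. The lower bound argument is essentially complete. You have correctly located the real difficulty in the upper bound: for a compact $K$ and a point $x_i\in K$ with $I(x_i)=\infty$ and $x_i(0)=0$, any small ball around $x_i$ meets some high-rate stratum $\mathbb{D}_{j,k}$, so the argument minimum in \eqref{CJK} is typically non-empty, and the ``bounded away from $\mathbb{D}_{<\mathcal{J},\mathcal{K}}$'' hypothesis of Theorem~\ref{CH-LDP} is not automatic --- in particular, when $(\alpha-1)/(\beta-1)\in\mathbb{Q}$ there may be several pairs at the same level and the shrinking-$r_i$ trick can chase its tail, since shrinking the ball may change $(\mathcal{J},\mathcal{K})$. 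Your second proposed fix (bypassing Theorem~\ref{CH-LDP} for these balls via a large-jump/small-remainder decomposition and Poisson tail bounds) is the cleaner one and is closer to how \cite{CH19} actually closes the argument; if you pursue it, make precise the statement that the small-jump remainder is $o(1)$ in $J_1$ uniformly, so that membership in $B(x_i,r_i)$ forces the large-jump skeleton to lie outside $\mathbb{D}^{\le\rho}$.
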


\section{Large Deviations for One-dimensional Stochastic Differential Equations} \label{sLDP}
In this section, we will prove that solutions to a kind of one-dimensional SDEs that are driven by heavy-tailed L\'evy processes, also satisfy the large-deviation theorem with the rate function transformed from the one for driven processes by a solution mapping.
\subsection{Large deviations}
According to \cite{priola2012}, we assume that $b$ is a bounded Lipschitz continuous function on $\R$ to ensure pathwise unique solutions of equations (\ref{SDE}).

Let $\epsilon=1/n$. Then, in distribution sense, we have
\begin{equation*}
  \epsilon L_t^\epsilon =\epsilon \frac{1}{\sqrt{\epsilon}} B_{t}+ \epsilon \int_0^t\int_{\mathbb{R}\backslash \{0\}}z \widetilde{N}^{\frac{1}{\epsilon}}(ds,dz) \overset{d}{=} \frac{1}{n} B_{nt}+\frac{1}{n}\int_0^t\int_{\mathbb{R}\backslash \{0\}}z \widetilde{N}(nds,dz)
  =\frac{1}{n}L_{nt}.
\end{equation*}

From Theorem \ref{CH-LDP} and Theorem \ref{CH-WLDP}, we know that the processes $\{\epsilon L^\epsilon \}_{\epsilon \geq 0}$ satisfy the large deviations therein.

Define a deterministic mapping $F: \mathbb{D} \to \mathbb{D}$ by $f=F(g)$, where $f$ is the solution of
\begin{equation}\label{F}
  f(t)=\int_0^t b(f(s))ds+g(t), \quad t \in [0,1].
\end{equation}

For every fixed $g\in \mathbb D$, one can easily verify that the following mapping (which should not be confused with $F$)
\begin{equation}\label{contract}
  f\mapsto \int_0^\cdot b(f(s))ds+g
\end{equation}
is a contraction on the space $\mathbb D([0,\delta])$ under the uniform metric, with a contraction constant strictly less than 1 for sufficiently small $\delta>0$. Thus, the mapping \eqref{contract} has a unique fixed-point $f$, which solves the equation \eqref{F} on the interval $[0,\delta]$, due to the Banach fixed-point theorem. By a standard extension procedure, one can extend the solution to the whole interval $[0,1]$. Since the definition of fixed-points is independent of the choice of metrics on $\mathbb D$, once the fixed-point $f$ uniquely exists by the contraction mapping \eqref{contract} under the uniform metric, it will also uniquely exist under the Skorokhod $J_1$ metric (in fact, it will always uniquely exist whatever metric we use). Indeed, the existence is obvious, and if there is another fixed-point $f^*$, then by the assumption, $\|f-f^*\|=0$, and hence $d(f,f^*)=0$.

%

Thus, $F$ is a well-defined mapping from $\mathbb{D}$ to $\mathbb{D}$. If we integrate (\ref{SDE}) over $[0,t]$, we see that $Y^\epsilon_t=F(\epsilon L^\epsilon)(t)$.

Inspired by \emph{Contraction Principle}, we want to prove that $F$ is continuous under the Skorokhod metric $d$. It is done in the following lemma.

\begin{lemma}\label{Conti}
  $F$ is a continuous mapping under Skorokhod metric $d$.
\end{lemma}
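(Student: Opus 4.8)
The plan is to prove continuity of $F$ directly from the definition of the Skorokhod $J_1$ metric, exploiting that $b$ is bounded and Lipschitz. Suppose $g_n \to g$ in $(\mathbb{D}, d)$; I want to show $F(g_n) \to F(g)$. By definition of the $J_1$ metric there exist time-change homeomorphisms $\lambda_n \in \Lambda$ with $\|\lambda_n - e\| \to 0$ and $\|g_n \circ \lambda_n - g\| \to 0$. The natural guess is that the \emph{same} time changes $\lambda_n$ witness the convergence $F(g_n) \to F(g)$; that is, I will try to show $\|F(g_n) \circ \lambda_n - F(g)\| \to 0$. This is the right target because the defining equation \eqref{F} for $f = F(g)$ behaves well under time change: if $f_n = F(g_n)$, then writing $\tilde f_n := f_n \circ \lambda_n$ and using the substitution $s = \lambda_n(u)$ in the integral, one gets an identity of the form $\tilde f_n(t) = \int_0^t b(\tilde f_n(u)) \, d\lambda_n(u) + (g_n \circ \lambda_n)(t)$, where $d\lambda_n$ denotes integration against the (monotone, hence finite-variation) measure induced by $\lambda_n$.

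The core estimate is then a Grönwall-type argument comparing $\tilde f_n$ with $f = F(g)$. Subtracting the two integral equations,
\[
\tilde f_n(t) - f(t) = \int_0^t b(\tilde f_n(u))\,d\lambda_n(u) - \int_0^t b(f(u))\,du + \big[(g_n\circ\lambda_n)(t) - g(t)\big].
\]
I split the difference of integrals into $\int_0^t \big(b(\tilde f_n(u)) - b(f(u))\big)\,d\lambda_n(u)$ plus $\int_0^t b(f(u))\,(d\lambda_n(u) - du)$. The first term is bounded by $\mathrm{Lip}(b)\int_0^t |\tilde f_n(u) - f(u)|\,d\lambda_n(u)$, and since $\lambda_n(t) \le t + \|\lambda_n - e\| \le 1 + o(1)$, its total mass is uniformly bounded; this is exactly the shape needed for Grönwall. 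The second term I control using $\|g_n \circ \lambda_n - g\| \to 0$ — no, more precisely, using that $\lambda_n \to e$ uniformly: since $b$ is bounded, $\big|\int_0^t b(f(u))(d\lambda_n(u) - du)\big| = \big|\int_0^t b(f(u))\,d\lambda_n(u) - \int_0^t b(f(u))\,du\big|$, and after an integration by parts (or directly, using that $f$ has only countably many jumps and $\lambda_n \to e$ uniformly with $\lambda_n(0)=0$, $\lambda_n(1)=1$) this is $O(\|\lambda_n - e\| \cdot \|b\|_\infty)$ up to a vanishing correction; alternatively one bounds it crudely by $2\|b\|_\infty \|\lambda_n - e\|_{\mathrm{something}}$ after noting the two integrals differ only through the reparametrization. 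Combined with the sup term $\varepsilon_n := \|g_n\circ\lambda_n - g\| + \|b\|_\infty\cdot(\text{correction}) \to 0$, Grönwall's inequality gives $\|\tilde f_n - f\| \le \varepsilon_n \, e^{\mathrm{Lip}(b)(1+o(1))} \to 0$. Together with $\|\lambda_n - e\| \to 0$ this yields $d(f_n, f) \le \|\lambda_n - e\| \vee \|\tilde f_n - f\| \to 0$, which is the claim.

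The main obstacle I anticipate is handling the term $\int_0^t b(f(u))\,(d\lambda_n(u) - du)$ rigorously: $f = F(g)$ is only càdlàg, so I cannot invoke continuity of $u \mapsto b(f(u))$, and I must argue that reparametrizing a bounded càdlàg integrand by a near-identity homeomorphism changes its integral by a vanishing amount. The clean way is to write $\int_0^t b(f(u))\,d\lambda_n(u) = \int_0^{\lambda_n(t)} b(f(\lambda_n^{-1}(v)))\,dv$ and compare with $\int_0^t b(f(u))\,du$, splitting into (i) the discrepancy from the endpoint $\lambda_n(t)$ versus $t$, bounded by $\|b\|_\infty\|\lambda_n - e\|$, and (ii) $\int_0^{\min(t,\lambda_n(t))} \big(b(f(\lambda_n^{-1}(v))) - b(f(v))\big)\,dv$; for (ii) I use $\mathrm{Lip}(b)$ to reduce to $\int |f(\lambda_n^{-1}(v)) - f(v)|\,dv \to 0$, which follows from $\lambda_n^{-1} \to e$ uniformly together with the fact that a càdlàg function is continuous Lebesgue-a.e. and its precomposition with near-identity maps converges in $L^1$ (dominated convergence, since $f$ is bounded on $[0,1]$). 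A secondary, more bookkeeping-level point is confirming the change-of-variables identity for $\tilde f_n$ and ensuring $f_n, f$ are genuinely bounded on $[0,1]$ — which is immediate since $\|F(g)\|\le \|b\|_\infty + \|g\|$ from \eqref{F}. With these pieces in place the Grönwall step is routine.
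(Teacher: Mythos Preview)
Your proposal is correct and follows essentially the same approach as the paper: use the same time changes $\lambda_n$ that witness $g_n\to g$, reduce via a Gr\"onwall argument, and handle the one nontrivial ``reparametrization'' term by showing $\int_0^1 |h(\lambda_n^{\pm 1}(s))-h(s)|\,ds\to 0$ for a bounded c\`adl\`ag $h$ (you via dominated convergence using a.e.\ continuity; the paper isolates this as a separate lemma proved with reverse Fatou). The only cosmetic difference is that you phrase the time-changed equation through the Stieltjes integral $d\lambda_n$ and compare $f_n\circ\lambda_n$ with $f$, whereas the paper compares $f\circ\lambda_n$ with $f_n$ directly; the estimates are the same.
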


The proof is provided in Appendix.


In light of Theorem \ref{CH-LDP} and the continuity of $F$, the large-deviations results for the solution processes boil down to the driven processes. The result is immediate once we have proved a ``bounded away" property for every set considered therein.

For this purpose, we define $\mathbb{G}_{<j,k}\triangleq \overline{F(\mathbb{D}_{<j,k})}$ for every pair $(j,k) \in \mathbb{N}^2$. We provide a lemma below to ensure $F^{-1}(A)$ being bounded away from $\mathbb{D}_{<j,k}$.

\begin{lemma}\label{bda}
For every bounded measurable set $A$ in $\mathbb{D}$ which is bounded away from $\mathbb{G}_{<j,k}$, the pre-image $F^{-1}(A)$ is bounded away from $\mathbb{D}_{<j,k}$. If $A$ is also bounded away from $\overline{F(\mathbb{D}_{<j,k}\cup\mathbb{D}_{j,k})}$, the pre-image $F^{-1}(A)$ is bounded away from $\mathbb{D}_{<j,k}\cup\mathbb{D}_{j,k}$.
\end{lemma}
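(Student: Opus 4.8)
The plan is to exploit the continuity of $F$ from Lemma~\ref{Conti} together with a compactness argument, reducing the "bounded away" property of the pre-image to a statement about the image $F(\mathbb{D}_{<j,k})$. The key observation is that $\mathbb{D}_{<j,k}$ is a closed subset of $\mathbb{D}$ (it is a finite union of the $\mathbb{D}_{l,m}$ with $(\alpha-1)l + (\beta-1)m$ bounded, and each such step-function space, together with its limit behavior, is well-behaved in the $J_1$ topology). First I would argue by contradiction: suppose $F^{-1}(A)$ is not bounded away from $\mathbb{D}_{<j,k}$. Then there exist sequences $g_n \in F^{-1}(A)$ and $h_n \in \mathbb{D}_{<j,k}$ with $d(g_n, h_n) \to 0$. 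The goal is to produce a contradiction with the hypothesis that $A$ is bounded away from $\mathbb{G}_{<j,k} = \overline{F(\mathbb{D}_{<j,k})}$.

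The main obstacle is that the $J_1$ topology on $\mathbb{D}$ is \emph{not} locally compact, so one cannot directly extract convergent subsequences of $g_n$ or $h_n$ from the mere fact that $d(g_n,h_n)\to 0$; boundedness of $A$ in the $J_1$ metric does not give compactness. However, this is exactly where the structure of $F$ helps. Since $b$ is bounded, the solution map $F$ has a uniform modulus-of-continuity control: $F(g) - g$ is Lipschitz with constant $\|b\|_\infty$, so $F(g)$ inherits the oscillation and jump structure of $g$ up to a uniformly small Lipschitz perturbation. Concretely, I would establish the quantitative estimate that for $g, g'$ with $d(g,g')$ small, $d(F(g), F(g'))$ is small \emph{with a modulus independent of} $g, g'$ on bounded sets — in fact Lemma~\ref{Conti}'s proof should give a uniform-continuity statement on bounded subsets of $\mathbb{D}$, since the Gronwall-type argument there only uses $\|b\|_\infty$ and $\mathrm{Lip}(b)$. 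Granting this, from $d(g_n, h_n) \to 0$ with $h_n \in \mathbb{D}_{<j,k}$, we get $d(F(g_n), F(h_n)) \to 0$, and $F(h_n) \in F(\mathbb{D}_{<j,k}) \subset \mathbb{G}_{<j,k}$. Since $F(g_n) \in A$, this yields $\inf_{x \in A, y \in \mathbb{G}_{<j,k}} d(x,y) = 0$, contradicting the assumption that $A$ is bounded away from $\mathbb{G}_{<j,k}$.

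For the uniform continuity of $F$ on bounded sets, the point is the following: writing $f = F(g)$, $f' = F(g')$, and choosing near-optimal $\lambda \in \Lambda$ for $d(g,g')$, one estimates $\|f \circ \lambda - f'\|$ by splitting $f \circ \lambda - f' = (\int_0^{\lambda(\cdot)} b(f(s))ds - \int_0^\cdot b(f'(s))ds) + (g\circ\lambda - g')$; the second term is controlled by $d(g,g')$ directly, and the first term, after bounding the time-change discrepancy by $\|b\|_\infty\|\lambda - e\|$ and the remaining difference by $\mathrm{Lip}(b)\int_0^\cdot |f(\lambda(s)) - f'(s)|\,ds$ (modulo another $\|b\|_\infty\|\lambda - e\|$ term), closes via Gronwall into a bound of the form $d(F(g),F(g')) \le C(\|b\|_\infty, \mathrm{Lip}(b)) \, d(g,g')$ — crucially with $C$ depending only on $b$, not on $g,g'$. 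This makes $F$ globally Lipschitz for $d$, which is stronger than needed and removes the boundedness issue entirely. (If the proof of Lemma~\ref{Conti} in the Appendix already extracts this Lipschitz-type bound, I would simply cite it.)

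For the second assertion, the argument is identical with $\mathbb{D}_{<j,k}$ replaced by $\mathbb{D}_{<j,k} \cup \mathbb{D}_{j,k}$ and $\mathbb{G}_{<j,k}$ replaced by $\overline{F(\mathbb{D}_{<j,k} \cup \mathbb{D}_{j,k})}$: if $F^{-1}(A)$ were not bounded away from $\mathbb{D}_{<j,k}\cup\mathbb{D}_{j,k}$, pick $g_n \in F^{-1}(A)$ and $h_n \in \mathbb{D}_{<j,k}\cup\mathbb{D}_{j,k}$ with $d(g_n,h_n)\to 0$; then $F(h_n) \in F(\mathbb{D}_{<j,k}\cup\mathbb{D}_{j,k}) \subset \overline{F(\mathbb{D}_{<j,k}\cup\mathbb{D}_{j,k})}$ and $d(F(g_n), F(h_n)) \to 0$ with $F(g_n) \in A$, contradicting that $A$ is bounded away from $\overline{F(\mathbb{D}_{<j,k}\cup\mathbb{D}_{j,k})}$. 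The only subtlety I would double-check is that $F$ maps step functions to step functions (so that $F(\mathbb{D}_{j,k})$ is genuinely comparable with the relevant geometry) — but this is not actually needed for the bounded-away conclusion, since we only use the closure on the target side.
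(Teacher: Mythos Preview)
Your overall strategy---argue by contradiction, take sequences $g_n\in F^{-1}(A)$ and $h_n\in\mathbb{D}_{<j,k}$ with $d(g_n,h_n)\to 0$, and deduce $d(F(g_n),F(h_n))\to 0$ to contradict the bounded-away hypothesis---is exactly the paper's. The gap is in the quantitative step: the Gr\"onwall argument you sketch does \emph{not} yield a global Lipschitz bound $d(F(g),F(g'))\le C\,d(g,g')$ with $C=C(\|b\|_\infty,\mathrm{Lip}(b))$. When you write $f\circ\lambda-f'$ and split the integral term, you are left (as in the proof of Lemma~\ref{Conti}) with
\[
\int_0^{\lambda(t)} b(f(s))\,ds-\int_0^{t} b((f\circ\lambda)(s))\,ds
=\int_0^t\bigl[b(f(s))-b(f(\lambda(s)))\bigr]\,ds+\int_t^{\lambda(t)} b(f(s))\,ds,
\]
and only the second piece is controlled by $\|b\|_\infty\|\lambda-e\|$. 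The first piece is bounded by $L\int_0^1|f(\lambda(s))-f(s)|\,ds$, which in turn involves $\int_0^1|g(\lambda(s))-g(s)|\,ds$. Lemma~\ref{Conti} handles this via Lemma~\ref{lem} (dominated convergence for a \emph{fixed} $g$), so the modulus depends on $g$; it is not uniform on bounded sets. Indeed, for $g$ with $n$ unit jumps and $\|\lambda-e\|\sim 1/n$ one has $\int_0^1|g(\lambda(s))-g(s)|\,ds$ bounded away from $0$, so the uniform-continuity (let alone Lipschitz) claim fails.

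The paper closes this gap precisely by using the two hypotheses you set aside. Because $A$ is bounded by $M$, one first shows $d(h_n,\mathbf 0)\le 1+C+M$, so the jump sizes of $h_n$ are uniformly bounded. Because $h_n\in\mathbb{D}_{<j,k}$, it has at most $(\alpha-1)j+(\beta-1)k$ jumps. These two facts give, for $F(\eta_n)$ in place of $f$ above,
\[
\int_0^t\bigl|F(h_n)(\lambda_n(s))-F(h_n)(s)\bigr|\,ds\le \widetilde C\,\|\lambda_n-e\|,
\]
with $\widetilde C$ depending only on $L,C,M,j,k$---this is the ``claim'' inside the paper's proof, and it is where both the step-function structure and the boundedness of $A$ are genuinely used. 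With that estimate in hand the Gr\"onwall argument does close and yields $d(F(g_n),F(h_n))\to 0$. So your contradiction scheme is right, but you must replace the (false) global Lipschitz assertion by this structure-dependent estimate on the $\mathbb{D}_{<j,k}$ side.
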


The proof is similar to Lemma \ref{Conti}. It is provided in Appendix.

The following simple lemma will simplify our results.

\begin{lemma}\label{Inj}
  The mapping $F$ is an injection.
\end{lemma}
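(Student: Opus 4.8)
The plan is to show that if $F(g_1) = F(g_2)$ then $g_1 = g_2$, by exploiting the defining integral equation \eqref{F}. Suppose $f = F(g_1) = F(g_2)$. Then by definition $f(t) = \int_0^t b(f(s))\,ds + g_1(t)$ and simultaneously $f(t) = \int_0^t b(f(s))\,ds + g_2(t)$ for all $t \in [0,1]$. Subtracting these two identities, the integral terms are \emph{identical} (they depend only on $f$, which is the same function in both cases), so we immediately get $g_1(t) = g_2(t)$ for every $t$, i.e. $g_1 = g_2$.

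The only subtlety worth spelling out is why $F$ is well-defined as a genuine single-valued map, so that the statement ``$F(g_1) = F(g_2)$'' is meaningful: this is exactly the content of the paragraph preceding Lemma \ref{Conti}, where the Banach fixed-point argument applied to the contraction \eqref{contract} produces a unique $f$ solving \eqref{F} for each given $g$. Granting that, injectivity is essentially a one-line consequence: the map $g \mapsto F(g)$ has the explicit left inverse $f \mapsto f - \int_0^\cdot b(f(s))\,ds$ on its range. I would present it in precisely that form — exhibit the left inverse — which makes the injection transparent and also foreshadows that $F$ is a bijection onto its image.

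I do not anticipate any real obstacle here; unlike Lemmas \ref{Conti} and \ref{bda}, there is no metric/topology issue to manage, since the argument is purely algebraic and pointwise in $t$. The one thing to be careful about is not to conflate the map $F$ with the auxiliary contraction \eqref{contract}: the cancellation of the integral term works because both $g_1$ and $g_2$ are fed into \eqref{F} and yield the \emph{same} solution $f$, so $\int_0^\cdot b(f(s))\,ds$ is one fixed function, not two different ones. With that observation in place the proof is complete in a few lines.
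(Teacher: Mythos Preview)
Your proof is correct and is essentially the same argument as the paper's: both exploit that the integral term $\int_0^\cdot b(F(g)(s))\,ds$ depends only on $F(g)$, so if $F(g_1)=F(g_2)$ the integral terms coincide and subtraction forces $g_1=g_2$. The paper phrases this as a proof by contradiction (assuming $d(\xi_1,\xi_2)>0$ and deducing $\|F(\xi_1)-F(\xi_2)\|=\|\xi_1-\xi_2\|>0$), while you argue directly and frame it via the explicit left inverse $f\mapsto f-\int_0^\cdot b(f(s))\,ds$; the mathematical content is identical.
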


\begin{proof}

Proof by contradiction.

Suppose there exist $\xi_1$, $\xi_2$ and $d(\xi_1,\xi_2) >0$ but  $F(\xi_1)=F(\xi_2)$. We know that $\|\xi_1-\xi_2\|>0$. Then by the definition of $F$, we know
\begin{align*}
F(\xi_1)(t)&=\int_0^tb(F(\xi_1)(s))ds+\xi_1(t), \\
F(\xi_2)(t)&=\int_0^tb(F(\xi_2)(s))ds+\xi_2(t).
\end{align*}

Therefore, $\|F(\xi_1)-F(\xi_2) \|=\|\xi_1-\xi_2\|>0$ contradicts with the assumption that $F(\xi_1)=F(\xi_2)$.
\end{proof}

So, we know that $F^{-1}(\xi)$ is uniquely determined by every $\xi$ in $F(\mathbb{D}([0,1]))$. The following lemma is essential in establishing the weak large deviation principle.

Endow the image $F(\mathbb{D})$ with the same metric as $\mathbb{D}$. The following shows that when restricted on $F(\mathbb{D})$, the inverse mapping $F^{-1}$ is also continuous. The proof is similar to Lemma \ref{Conti}, and we leave it into Appendix.

\begin{lemma}\label{InConti}
  The inverse mapping $F^{-1}: F(\mathbb{D}) \to \mathbb{D}$ is a continuous mapping under Skorokhod metric $d$. Moreover, $F^{-1}$ is Cauchy continuous, which means $F^{-1}$ maps all the Cauchy sequences in $(F(\mathbb{D}),d)$ to Cauchy sequences in $(\mathbb{D},d)$.
\end{lemma}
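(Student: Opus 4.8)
The plan is to transfer the contraction-estimate technology already used for $F$ over to $F^{-1}$. Recall that $F$ is defined by the fixed-point relation $F(g)(t)=\int_0^t b(F(g)(s))\,ds+g(t)$; inverting this algebraically gives the \emph{explicit} formula
\[
  F^{-1}(f)(t)=f(t)-\int_0^t b(f(s))\,ds,\qquad t\in[0,1],
\]
valid for every $f\in F(\mathbb D)$. So unlike $F$, the inverse map is given by a closed form, and this is what makes the argument go through cleanly. First I would establish continuity in the uniform metric: if $f_n\to f$ in $\|\cdot\|$ with all $f_n,f\in F(\mathbb D)$, then $\int_0^t b(f_n(s))\,ds\to\int_0^t b(f(s))\,ds$ uniformly in $t$ by the Lipschitz (indeed just the boundedness plus pointwise convergence, via dominated convergence) property of $b$, whence $\|F^{-1}(f_n)-F^{-1}(f)\|\to 0$. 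Uniform convergence implies $J_1$ convergence, so $F^{-1}$ is continuous at every point of $F(\mathbb D)$ that is approached by a uniformly convergent sequence.

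The subtlety — and the main obstacle — is that $J_1$ convergence in $\mathbb D$ is \emph{weaker} than uniform convergence: a sequence $f_n\to f$ in the $J_1$ metric need only satisfy $\|f_n\circ\lambda_n-f\|\to 0$ and $\|\lambda_n-e\|\to 0$ for some time changes $\lambda_n\in\Lambda$. So I would argue as follows. Given $d(f_n,f)\to 0$ with $f_n,f\in F(\mathbb D)$, pick $\lambda_n\in\Lambda$ with $\|\lambda_n-e\|\vee\|f_n\circ\lambda_n-f\|\to 0$. Writing $g_n=F^{-1}(f_n)$, $g=F^{-1}(f)$, the task is to bound $d(g_n,g)$, and the natural candidate time change is the \emph{same} $\lambda_n$. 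Compute
\[
  g_n(\lambda_n(t))-g(t)
  =\bigl(f_n(\lambda_n(t))-f(t)\bigr)-\int_0^{\lambda_n(t)}b(f_n(s))\,ds+\int_0^t b(f(s))\,ds .
\]
The first bracket is $o(1)$ uniformly by hypothesis. For the integral terms, split $\int_0^{\lambda_n(t)}b(f_n(s))\,ds-\int_0^{t}b(f(s))\,ds$ as $\int_t^{\lambda_n(t)}b(f_n(s))\,ds+\int_0^t\bigl(b(f_n(s))-b(f(s))\bigr)\,ds$; the former is bounded by $\|b\|_\infty\|\lambda_n-e\|\to 0$, and for the latter substitute $s\mapsto\lambda_n(s)$, use $|b(f_n(\lambda_n(s)))-b(f(s))|\le\mathrm{Lip}(b)\,\|f_n\circ\lambda_n-f\|\to 0$, and control the Jacobian distortion again by $\|\lambda_n-e\|$. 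Putting these together yields $\|g_n\circ\lambda_n-g\|\to 0$, and together with $\|\lambda_n-e\|\to 0$ this gives $d(g_n,g)\to 0$, proving continuity of $F^{-1}$ on $(F(\mathbb D),d)$.

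Finally, for Cauchy continuity, I would repeat exactly the same computation but with $f,g$ replaced by $f_m,g_m$ and the single time change $\lambda_{n,m}\in\Lambda$ realizing (up to a vanishing error) the infimum defining $d(f_n,f_m)$: since $\{f_n\}$ is $d$-Cauchy one has $\|\lambda_{n,m}-e\|\vee\|f_n\circ\lambda_{n,m}-f_m\|\to 0$ as $n,m\to\infty$, and the same three estimates (the free term, the tail integral $\int_{(\cdot)}^{\lambda_{n,m}(\cdot)}b$, and the Lipschitz term after a change of variables) show $d(g_n,g_m)\to 0$. Note that Cauchy continuity does \emph{not} follow formally from ordinary continuity here because $(F(\mathbb D),d)$ need not be complete — a Cauchy sequence in $F(\mathbb D)$ may converge in $\mathbb D$ to a limit outside $F(\mathbb D)$ — which is precisely why this stronger statement is recorded separately and why it is needed for the weak large deviation principle. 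All the routine estimates above are the same ones already carried out in the proof of Lemma \ref{Conti}, so in the paper I would simply refer back to that argument rather than repeat it.
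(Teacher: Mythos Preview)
Your strategy for continuity is the same as the paper's: exploit the explicit formula $F^{-1}(f)(t)=f(t)-\int_0^t b(f(s))\,ds$ and re-use the time changes $\lambda_n$ witnessing $d(f_n,f)\to 0$. One step, however, is mis-described. The term you call ``Jacobian distortion'' --- effectively $\int_0^t b(f_n(s))\,ds-\int_0^{\lambda_n^{-1}(t)} b(f_n(\lambda_n(u)))\,du$ --- cannot be bounded directly by $\|\lambda_n-e\|$: after the substitution one gets $\int_0^{\lambda_n^{-1}(t)} b(f_n(\lambda_n(u)))\,d(\lambda_n-e)(u)$, and although $\lambda_n-e$ has bounded variation (at most $2$, being a difference of increasing functions), that variation need \emph{not} tend to $0$ when $\|\lambda_n-e\|\to 0$. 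The paper instead decomposes so that the delicate term involves only the \emph{fixed} limit $f$, namely $\int_0^t b(f(\lambda_n(s)))\,ds-\int_0^{\lambda_n(t)} b(f(s))\,ds$, and then invokes the dominated-convergence Lemma (their Lemma~A.1: $\int_0^1|f(\lambda_n(s))-f(s)|\,ds\to 0$). Since you explicitly refer back to the proof of Lemma~\ref{Conti}, which uses exactly this device, your continuity argument can be repaired; but the bound is ``$o(1)$ via dominated convergence'', not ``$O(\|\lambda_n-e\|)$''.

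This distinction becomes a genuine gap in your Cauchy-continuity argument. When you ``repeat exactly the same computation with $f_m$ in place of $f$'', the troublesome term becomes $\int_0^1|f_m(\lambda_{n,m}^{-1}(s))-f_m(s)|\,ds$ (or the symmetric version with $f_n$). Lemma~A.1 gives convergence to $0$ only for a \emph{fixed} c\`adl\`ag function; it yields no uniformity over the varying $f_m$, and your purported $\|\lambda_{n,m}-e\|$ bound is, as above, not available. The paper closes this gap by a different route: it argues by contradiction, uses the completeness of $(\mathbb D,d)$ to produce a limit $f$ of the Cauchy sequence $\{f_n\}$ (which may lie outside $F(\mathbb D)$), builds the comparison time-changes $\tilde\lambda_{n,n+k}=\lambda_n\circ\lambda_{n+k}^{-1}$ from the $\lambda_n$ linking each $f_n$ to this \emph{single} $f$, and then routes the dominated-convergence step through $f$ via the bound
\[
|f_n(\tilde\lambda_{n,n+k}(s))-f_n(s)|\le \|f_n\circ\lambda_n-f\|+|f(\lambda_{n+k}^{-1}(s))-f(s)|+|f(\lambda_n^{-1}(s))-f(s)|+\|f_n\circ\lambda_n-f\|.
\]
The middle two terms are handled pointwise a.e.\ (continuity points of $f$) and then by dominated convergence. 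In short, the missing idea in your proposal is to anchor the estimates to the limit $f$; without it, the ``same three estimates'' do not carry over to the Cauchy setting.
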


Now combining Lemma \ref{Conti}, \ref{Inj} and \ref{InConti}, we know that $F: \mathbb{D} \to \mathbb{D}$ is a topological embedding, whose inverse $F^{-1}$ is Cauchy continuous when restricted on $F(\mathbb{D})$.

For every Cauchy sequence $\{\xi_n\}_{n\geq 1}$ in $(F(\mathbb{D}),d)$, we know that by the Cauchy continuity of $F^{-1}$, $\{F^{-1}(\xi_n)\}_{n \geq 1}$ is a Cauchy sequence in $(\mathbb{D},d)$. Then, the completeness of $\mathbb D$ yields that there exists $g \in \mathbb{D}$ such that $\lim_{k \to \infty} d(F^{-1}(\xi_n),g) =0$. As a consequence of the continuity of $F$, $\lim_{k \to \infty} d(\xi_n,F(g)) =0$, which means that $F(\mathbb{D})$ is a closed subset of $\mathbb{D}$. More generally, let $A \subset \mathbb{D}$ be a closed subset. Then $(A, d)$ is a complete metric subspace of $\mathbb{D}$. Since $F^{-1}|_{F(\mathbb{D})}$ is Cauchy-continuous, the restriction $F^{-1}|_{F(A)}$ is also Cauchy-continuous. A similar argument yields that $F(A)$ is closed in $\mathbb{D}$. 

Furthermore, since $F$ is a topological embedding with closed image, $F$ is also a proper mapping (see e.g. \cite[Proposition A.53]{Lee13}), that is, the pre-image of every compact set is compact. Indeed, let $K\subset \mathbb{D}$ be compact. Since $F(\mathbb{D})$ is closed, we have that $K \cap F(\mathbb{D})$ is compact. Then the continuity of $F^{-1}: F(\mathbb{D}) \to \mathbb{D}$ in Lemma \ref{InConti} implies that $F^{-1}(K \cap F(\mathbb{D})) = F^{-1}(K)$ is compact.

Therefore, we have the following corollary.

\begin{corollary}\label{Closed}
  The mapping $F: \mathbb{D} \to \mathbb{D}$ is a closed mapping, which means $F$ maps closed subsets of $(\mathbb{D},d)$ to closed subsets. The mapping $F$ is also a proper mapping, which means the pre-image of every compact set by $F$ is compact.
\end{corollary}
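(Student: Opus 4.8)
The plan is to deduce the corollary from the structural facts already assembled: by Lemmas~\ref{Conti}, \ref{Inj} and \ref{InConti}, the map $F$ is a continuous injection whose inverse $F^{-1}:F(\mathbb D)\to\mathbb D$ is Cauchy continuous, i.e. $F$ is a topological embedding. The single mechanism driving both assertions is the combination ``Cauchy continuity of $F^{-1}$, plus completeness of the relevant closed subspace, plus continuity of $F$''. I would first record, as the base case, that $F(\mathbb D)$ is closed, then upgrade this to arbitrary closed sets, and finally derive properness from closedness of the image together with continuity of $F^{-1}$.

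For closedness of images: let $A\subseteq\mathbb D$ be closed, so that $A$ is itself completely metrizable as a closed subspace of the Polish space $\mathbb D$. Take any sequence $(\eta_n)\subseteq F(A)$ with $\eta_n\to\eta$ in $\mathbb D$, and write $\eta_n=F(g_n)$ with $g_n=F^{-1}(\eta_n)\in A$, the preimage being unique by Lemma~\ref{Inj}. Since $(\eta_n)$ converges it is Cauchy, and Cauchy continuity of $F^{-1}$ (Lemma~\ref{InConti}) makes $(g_n)$ a Cauchy sequence in $(A,d)$; completeness then yields $g\in A$ with $g_n\to g$, whereupon continuity of $F$ (Lemma~\ref{Conti}) gives $\eta_n=F(g_n)\to F(g)$. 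Hence $\eta=F(g)\in F(A)$, so $F(A)$ is closed; taking $A=\mathbb D$ shows in particular that $F(\mathbb D)$ is closed.

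For properness: let $K\subseteq\mathbb D$ be compact. Because $F(\mathbb D)$ is closed, $K\cap F(\mathbb D)$ is a closed subset of a compact set, hence compact, and by injectivity $F^{-1}(K)=F^{-1}\bigl(K\cap F(\mathbb D)\bigr)$. The right-hand side is the image of the compact set $K\cap F(\mathbb D)$ under the continuous map $F^{-1}:F(\mathbb D)\to\mathbb D$ of Lemma~\ref{InConti}, hence compact. Equivalently, once Lemmas~\ref{Conti}, \ref{Inj}, \ref{InConti} have identified $F$ as a topological embedding with closed image, one may simply invoke the general fact that such embeddings are proper, \cite[Proposition~A.53]{Lee13}.

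The substantive work is all upstream, in the appendix Lemmas~\ref{Conti} and \ref{InConti}: granted them, the corollary is a short topological deduction, and the only genuine subtlety is that a $d$-Cauchy sequence need not converge in the $J_1$ metric $d$ exactly as written, so the completeness invoked above should be read relative to a complete metric inducing the same topology on $\mathbb D$. This is precisely why Lemma~\ref{InConti} is stated in the \emph{Cauchy-continuity} form rather than as mere continuity: Cauchy continuity is stable under replacing $d$ by a topologically equivalent metric, and it is what makes the pull-back argument chain cleanly.
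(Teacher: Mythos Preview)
Your argument is essentially identical to the paper's: both establish closedness by pulling back a convergent sequence in $F(A)$ via the Cauchy-continuous $F^{-1}$, invoking completeness to obtain a limit in $A$, and pushing forward via continuity of $F$; both then derive properness from closedness of $F(\mathbb D)$ together with continuity of $F^{-1}$, and both cite \cite[Proposition~A.53]{Lee13}. One remark: your final aside that ``Cauchy continuity is stable under replacing $d$ by a topologically equivalent metric'' is not correct in general---Cauchy sequences depend on the metric, not merely the topology---but since the paper itself simply asserts completeness of $(\mathbb D,d)$ without addressing this point, your proof is at the same level of rigor.
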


Define
\begin{equation}\label{JK}
(\widetilde{\mathcal{J}}(A),\widetilde{\mathcal{K}}(A)) \in \argmin \limits_{ \begin{subarray}{c}  (j,k)\in\mathbb{Z}_+^2, \\ F(\mathbb{D}_{j,k})\cap A \neq \emptyset\end{subarray}} \mathcal{I}(j,k),
\end{equation}
and
\begin{equation*}
\widetilde{C}_{j,k}(A)={C}_{j,k}(F^{-1}(A)).
\end{equation*}

As the mapping $F$ is an injection, we know that
\[
  F^{-1} (F(\mathbb{D}_{j,k}) \cap A)= \mathbb{D}_{j,k} \cap F^{-1}(A).
\]
This gives
\[
  \{(j,k)\in\mathbb{Z}_+^2:F(\mathbb{D}_{j,k})\cap A \neq \emptyset\}=\{(j,k)\in\mathbb{Z}_+^2: \mathbb{D}_{j,k}\cap F^{-1}(A)  \neq \emptyset\}.
\]
Thus, we have
\begin{equation}\label{ejk}
    \argmin \limits_{ \begin{subarray}{c}  (j,k)\in\mathbb{Z}_+^2, \\ F(\mathbb{D}_{j,k})\cap A \neq \emptyset\end{subarray}} \mathcal{I}(j,k)=\argmin \limits_{ \begin{subarray}{c}  (j,k)\in\mathbb{Z}_+^2, \\ \mathbb{D}_{j,k}\cap F^{-1}(A)  \neq \emptyset\end{subarray}} \mathcal{I}(j,k),
\end{equation}
which means when one side of the above equation is unique, the other is unique too.

We can now deduce our main large deviation theorem for bounded measurable sets in $\mathbb{D}$.
\begin{theorem}[Large deviations] \label{LDP}
  Suppose that $A$ is a bounded measurable set. If the argument minimum in (\ref{JK}) is non-empty and $A$ is bounded away from $\mathbb{G}_{<\widetilde{\mathcal{J}}(A),\widetilde{\mathcal{K}}(A)}$, the argument minimum is then unique and
\begin{align*}
\liminf_{\epsilon \to 0} \frac{\mathbb{P}(Y^\epsilon \in A)}{(1/\epsilon \nu[1/\epsilon,\infty))^{\widetilde{\mathcal{J}}(A)}(1/\epsilon \nu(-\infty,-1/\epsilon])^{\widetilde{\mathcal{K}}(A)}} & \geq \widetilde{C}_{\widetilde{\mathcal{J}}(A),\widetilde{\mathcal{K}}(A)}(A^\circ), \\
\limsup_{\epsilon \to 0} \frac{\mathbb{P}(Y^\epsilon \in A)}{(1/\epsilon\nu[1/\epsilon,\infty))^{\widetilde{\mathcal{J}}(A)}(1/\epsilon\nu(-\infty,-1/\epsilon])^{\widetilde{\mathcal{K}}(A)}} & \leq \widetilde{C}_{\widetilde{\mathcal{J}}(A),\widetilde{\mathcal{K}}(A)}(\bar{A}). \\
\end{align*}
Moreover, if the argument (\ref{JK}) is empty and $A$ is bounded away from $\overline{F(\mathbb{D}_{<l,m}\cup\mathbb{D}_{l,m})}$ for some $(l,m) \in \mathbb{Z}^2_+\backslash\{(0,0)\}$, we have
\begin{equation}\label{LDP-3}
  \lim_{\epsilon \to 0} \frac{\mathbb{P}(Y^\epsilon\in A)}{[1/\epsilon \nu[1/\epsilon,\infty)]^l[1/\epsilon\nu(-\infty,-1/\epsilon]]^m}=0.
\end{equation}

\end{theorem}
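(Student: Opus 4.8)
The plan is to reduce everything to Theorem \ref{CH-LDP} via the deterministic solution map $F$, using the identity $Y^\epsilon = F(\epsilon L^\epsilon)$ and the distributional identity $\epsilon L^\epsilon \overset{d}{=} \overline{L^n}$ with $n = 1/\epsilon$. The starting point is $\mathbb{P}(Y^\epsilon \in A) = \mathbb{P}(\epsilon L^\epsilon \in F^{-1}(A)) = \mathbb{P}(\overline{L^n} \in F^{-1}(A))$, so the whole theorem follows if we can verify that $F^{-1}(A)$ satisfies the hypotheses of Theorem \ref{CH-LDP} whenever $A$ satisfies the hypotheses stated here, and that the asymptotic constants match up.

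First I would treat the nondegenerate case. Assume the argument minimum in \eqref{JK} is nonempty and $A$ is bounded away from $\mathbb{G}_{<\widetilde{\mathcal{J}}(A),\widetilde{\mathcal{K}}(A)} = \overline{F(\mathbb{D}_{<\widetilde{\mathcal{J}}(A),\widetilde{\mathcal{K}}(A)})}$. By \eqref{ejk}, the argument minimum in \eqref{CJK} for the set $F^{-1}(A)$ is the same pair, call it $(j,k) = (\widetilde{\mathcal{J}}(A),\widetilde{\mathcal{K}}(A)) = (\mathcal{J}(F^{-1}(A)),\mathcal{K}(F^{-1}(A)))$, and in particular it is nonempty. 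By Lemma \ref{bda}, since $A$ is bounded and bounded away from $\mathbb{G}_{<j,k}$, the pre-image $F^{-1}(A)$ is bounded away from $\mathbb{D}_{<j,k}$. Hence Theorem \ref{CH-LDP} applies to $F^{-1}(A)$: the argument minimum is unique (which by \eqref{ejk} transfers back to uniqueness of \eqref{JK}), and
\[
\liminf_{n\to\infty} \frac{\mathbb{P}(\overline{L^n}\in F^{-1}(A))}{(n\nu[n,\infty))^{j}(n\nu(-\infty,-n])^{k}} \geq C_{j,k}((F^{-1}(A))^\circ),
\]
with the matching $\limsup$ bounded above by $C_{j,k}(\overline{F^{-1}(A)})$. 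The last thing to do here is to relate the topological operations: since $F$ is continuous (Lemma \ref{Conti}), $F^{-1}(A^\circ) \subset (F^{-1}(A))^\circ$, giving $C_{j,k}((F^{-1}(A))^\circ) \geq C_{j,k}(F^{-1}(A^\circ)) = \widetilde{C}_{j,k}(A^\circ)$ by monotonicity of the measure $C_{j,k}$; and since $F$ is a closed map (Corollary \ref{Closed}) and injective (Lemma \ref{Inj}), one checks $\overline{F^{-1}(A)} \subset F^{-1}(\overline{A})$, whence $C_{j,k}(\overline{F^{-1}(A)}) \leq C_{j,k}(F^{-1}(\overline{A})) = \widetilde{C}_{j,k}(\overline{A})$. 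Rewriting $n = 1/\epsilon$ yields exactly the two displayed inequalities.

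For the degenerate case, suppose the argument minimum in \eqref{JK} is empty and $A$ is bounded away from $\overline{F(\mathbb{D}_{<l,m}\cup\mathbb{D}_{l,m})}$ for some $(l,m)\neq(0,0)$. By \eqref{ejk} the argument minimum in \eqref{CJK} for $F^{-1}(A)$ is also empty. Note $\overline{F(\mathbb{D}_{<l,m}\cup\mathbb{D}_{l,m})} = \overline{F(\mathbb{D}_{<l,m})\cup F(\mathbb{D}_{l,m})}$, and again by Lemma \ref{bda} (second assertion), boundedness of $A$ together with its being bounded away from this closure forces $F^{-1}(A)$ to be bounded away from $\mathbb{D}_{<l,m}\cup\mathbb{D}_{l,m}$. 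The second part of Theorem \ref{CH-LDP} then gives
\[
\lim_{n\to\infty} \frac{\mathbb{P}(\overline{L^n}\in F^{-1}(A))}{(n\nu[n,\infty))^l(n\nu(-\infty,-n])^m} = 0,
\]
which upon substituting $n = 1/\epsilon$ is precisely \eqref{LDP-3}.

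The main obstacle I anticipate is not any single deep estimate but the careful bookkeeping of the ``bounded away'' transfers and the topological inclusions. Lemma \ref{bda} is doing the real work — it is exactly the statement needed to push the ``bounded away'' hypothesis through $F^{-1}$ — but one must be attentive that boundedness of $A$ is genuinely used (the pre-image of an unbounded set under $F$ need not behave well), and that $\mathbb{G}_{<j,k}$ was defined precisely as $\overline{F(\mathbb{D}_{<j,k})}$ so that the hypotheses line up verbatim. The inclusions $F^{-1}(A^\circ)\subset(F^{-1}(A))^\circ$ (from continuity of $F$) and $\overline{F^{-1}(A)}\subset F^{-1}(\overline A)$ (needing $F$ closed and injective, i.e. Corollary \ref{Closed} and Lemma \ref{Inj}) are elementary point-set topology but are the step where a sloppy argument could go wrong, since in general only one inclusion holds in each case and we must be sure it is the one we need for the direction of each inequality. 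Everything else is substitution of $n=1/\epsilon$ and invoking the already-established lemmas.
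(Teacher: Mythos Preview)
Your proposal is correct and follows essentially the same route as the paper's proof: reduce to Theorem \ref{CH-LDP} via $\mathbb{P}(Y^\epsilon\in A)=\mathbb{P}(\epsilon L^\epsilon\in F^{-1}(A))$, use \eqref{ejk} to match the argmin sets, apply Lemma \ref{bda} to transfer the bounded-away hypothesis, and then compare $C_{j,k}$ on $(F^{-1}(A))^\circ$ and $\overline{F^{-1}(A)}$ with $\widetilde{C}_{j,k}$ on $A^\circ$ and $\overline{A}$. One small simplification: the inclusion $\overline{F^{-1}(A)}\subset F^{-1}(\overline{A})$ already follows from continuity of $F$ alone (because $F^{-1}(\overline{A})$ is closed and contains $F^{-1}(A)$), so you need not invoke Corollary \ref{Closed} or Lemma \ref{Inj} at that point; the paper uses only Lemma \ref{Conti} here.
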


\begin{proof}

  We first show that $F^{-1}(A)$ is bounded away from $\mathbb{D}_{<\mathcal{J}(F^{-1}(A)),\mathcal{K}(F^{-1}(A))}$.

  By Lemma \ref{bda}, we know that $F^{-1}(A)$ is bounded away from $\mathbb{D}_{<\widetilde{\mathcal{J}}(A),\widetilde{\mathcal{K}}(A)}$. As equation (\ref{ejk}) holds for the set $A$, all possible choice of the two pairs $(\widetilde{\mathcal{J}}(A),\widetilde{\mathcal{K}}(A))$ and $(\mathcal{J}(F^{-1}(A)),\mathcal{K}(F^{-1}(A))$ share the same subset of $\mathbb{Z}^2_+$. Then, we know $F^{-1}(A)$ is bounded away from $\mathbb{D}_{<\mathcal{J}(F^{-1}(A)),\mathcal{K}(F^{-1}(A))}$. So the right hand side of equation (\ref{ejk}) is unique, then $ (\widetilde{\mathcal{J}}(A),\widetilde{\mathcal{K}}(A))=(\mathcal{J}(F^{-1}(A)),\mathcal{K}(F^{-1}(A))$.

  Applying Theorem \ref{CH-LDP} to the set $F^{-1}(A)$, we get
  \begin{align}
    \liminf_{\epsilon \to 0} \frac{\mathbb{P}(\epsilon L^\epsilon \in F^{-1}(A))}{(1/\epsilon \nu[1/\epsilon,\infty))^{\mathcal{J}(F^{-1}(A))}(1/\epsilon \nu(-\infty,-1/\epsilon])^{\mathcal{K}(F^{-1}(A))}} & \geq C_{\mathcal{J}(F^{-1}(A)),\mathcal{K}(F^{-1}(A))}\left((F^{-1}(A))^\circ\right), \label{ldp1}\\
    \limsup_{\epsilon \to 0} \frac{\mathbb{P}(\epsilon L^\epsilon \in F^{-1}(A))}{(1/\epsilon\nu[1/\epsilon,\infty))^{\mathcal{J}(F^{-1}(A))}(1/\epsilon\nu(-\infty,-1/\epsilon])^{\mathcal{K}(F^{-1}(A))}} & \leq C_{\mathcal{J}(F^{-1}(A)),\mathcal{K}(F^{-1}(A))}(\overline{F^{-1}(A)}). \label{ldp2}
  \end{align}
  Note that $F$ is continuous under $d$, then we know that $F^{-1}(A^\circ) \subset (F^{-1}(A))^\circ$ and $\overline{F^{-1}(A)} \subset F^{-1}(\bar{A})$. Replacing $(\mathcal{J}(F^{-1}(A)),\mathcal{K}(F^{-1}(A))$ by $(\widetilde{\mathcal{J}}(A),\widetilde{\mathcal{K}}(A))$ and $C$ by $\tilde{C}$ in (\ref{ldp1}) and (\ref{ldp2}), we get the desired result.

  For equation (\ref{LDP-3}), we know that $F^{-1}(A)$ is bounded away from $\mathbb{D}_{<l,m}\cup\mathbb{D}_{l,m}$ by Lemma \ref{bda}. Applying Theorem \ref{CH-LDP} to $F^{-1}(A)$, we then get (\ref{LDP}).
\end{proof}

We will also obtain a weak large deviation principle as follows.

Define a function $\tilde{I}: \mathbb D\to [0,\infty]$ by
\[
\tilde{I}(\xi) \triangleq
\begin{cases}
  (\alpha-1)\mathcal{D}_+(\xi)+(\beta-1)\mathcal{D}_-(\xi), & \textrm{if } \xi=F(\eta) \textrm{ and }\eta \textrm{ is a step function with } \eta(0)=0, \\
\infty, & \textrm{otherwise,}
\end{cases}
\]
where $\mathcal{D}_+(\xi)$ counts the upward jumps of $\xi$ and $\mathcal{D}_-(\xi)$ counts the downward jumps of $\xi$. 

For each $\xi\in F(\mathbb D)$, as $F^{-1}(\xi )$ and $\xi $ share the same jump times and jump sizes, by the definition of $\tilde{I}$, one can easily see that $\tilde{I}(\xi)=I(F^{-1}(\xi))$. In other words, $\tilde{I}$ is the ``pushforward'' of $I$ by $F$. Now, since $F$ is a closed mapping by Corollary \ref{Closed}, we know that $\tilde I$ is also a rate function as the level sets of $\tilde I$ are just the images of the level sets of $I$ under $F$ (cf. the proof of \cite[Theorem 4.2.1]{Demb}).

\begin{theorem}[Weak large deviation principle]\label{wLDP}
  The solution $Y^\epsilon$ satisfies the weak large deviation principle with the rate function $\tilde{I}$ and speed $ \log 1/\epsilon$, i.e.,
\begin{equation*}
-\inf_{x \in G} \tilde{I}(x) \leq \liminf_{\epsilon \to 0} \frac{\log \mathbb{P}(Y^\epsilon \in G)}{\log 1/\epsilon},
\end{equation*}
 and
\begin{equation}\label{wldp2}
\limsup_{\epsilon \to 0} \frac{\log \mathbb{P}(Y^\epsilon \in K)}{\log 1/\epsilon} \leq -\inf_{x \in K} \tilde{I}(x),
\end{equation}
for every open set $G$ and compact set $K$ in $\mathbb{D}$.
\end{theorem}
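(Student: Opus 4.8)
The plan is to transfer the weak large deviation principle for $\overline{L^n}$ (equivalently $\epsilon L^\epsilon$) from Theorem~\ref{CH-WLDP} through the mapping $F$, using the fact that $F$ is a continuous, injective, closed, and proper mapping as established in Lemma~\ref{Conti}, Lemma~\ref{Inj}, Lemma~\ref{InConti}, and Corollary~\ref{Closed}. Since $Y^\epsilon = F(\epsilon L^\epsilon)$ in distribution and $\epsilon L^\epsilon \overset{d}{=} \overline{L^{1/\epsilon}}$, for any Borel set $B \subset \mathbb{D}$ we have $\mathbb{P}(Y^\epsilon \in B) = \mathbb{P}(\epsilon L^\epsilon \in F^{-1}(B))$, so the task reduces to controlling $F^{-1}(G)$ and $F^{-1}(K)$ and relating $\inf_{F^{-1}(\cdot)} I$ to $\inf_{(\cdot)} \tilde I$.

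For the \textbf{lower bound}, let $G \subset \mathbb{D}$ be open. Since $F$ is continuous, $F^{-1}(G)$ is open in $\mathbb{D}$, so Theorem~\ref{CH-WLDP} gives $\liminf_{\epsilon\to 0}\frac{\log\mathbb{P}(\epsilon L^\epsilon\in F^{-1}(G))}{\log 1/\epsilon}\ge -\inf_{x\in F^{-1}(G)}I(x)$. It remains to show $\inf_{x\in F^{-1}(G)}I(x) = \inf_{y\in G}\tilde I(y)$. By the identity $\tilde I(\xi)=I(F^{-1}(\xi))$ for $\xi\in F(\mathbb{D})$ and $\tilde I\equiv\infty$ off $F(\mathbb{D})$, we have $\inf_{y\in G}\tilde I(y)=\inf_{y\in G\cap F(\mathbb{D})}I(F^{-1}(y))$; since $F$ is injective this equals $\inf_{x\in F^{-1}(G)}I(x)$ (every $x$ with $I(x)<\infty$ lies in $\mathbb{D}$, and $F(x)\in G\cap F(\mathbb{D})$). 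This yields the lower bound.

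For the \textbf{upper bound}, let $K \subset \mathbb{D}$ be compact. By Corollary~\ref{Closed} the mapping $F$ is proper, so $F^{-1}(K)$ is compact in $\mathbb{D}$. Applying the upper bound in Theorem~\ref{CH-WLDP} to the compact set $F^{-1}(K)$ gives $\limsup_{\epsilon\to 0}\frac{\log\mathbb{P}(\epsilon L^\epsilon\in F^{-1}(K))}{\log 1/\epsilon}\le -\inf_{x\in F^{-1}(K)}I(x)$, and the same injectivity argument as above shows $\inf_{x\in F^{-1}(K)}I(x)=\inf_{y\in K}\tilde I(y)$. Rewriting the left side as $\limsup_{\epsilon\to 0}\frac{\log\mathbb{P}(Y^\epsilon\in K)}{\log 1/\epsilon}$ gives \eqref{wldp2}. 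One small point to verify is that $F^{-1}(K)$ being compact (hence the infimum of $I$ over it being attained, possibly as $+\infty$ if $F^{-1}(K)$ contains no step functions) poses no issue, since Theorem~\ref{CH-WLDP} applies to arbitrary compact sets including those on which $I\equiv\infty$.

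The \textbf{main obstacle} is not any single hard estimate but ensuring the two measurability/topology facts are airtight: that $F^{-1}(K)$ is genuinely compact (which is exactly the content of properness in Corollary~\ref{Closed}, itself resting on $F(\mathbb{D})$ being closed and $F^{-1}|_{F(\mathbb{D})}$ being continuous), and that the infima of $I$ and $\tilde I$ match across $F$, which requires injectivity together with the observation that $I(x)=\infty$ whenever $x\notin\mathbb{D}$ is vacuous here since the domain is already $\mathbb{D}$ — the real content is that points contributing finite $I$-value in $F^{-1}(K)$ correspond bijectively to points of $K\cap F(\mathbb{D})$ with finite $\tilde I$-value. Both facts have already been assembled in the preceding lemmas and corollary, so the proof is essentially a clean bookkeeping argument once those are invoked.
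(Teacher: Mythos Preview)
Your proposal is correct and follows essentially the same approach as the paper's own proof: both apply Theorem~\ref{CH-WLDP} to $F^{-1}(G)$ (open by continuity of $F$) and $F^{-1}(K)$ (compact by the properness in Corollary~\ref{Closed}), and then identify $\inf_{F^{-1}(U)} I = \inf_{U} \tilde I$ via injectivity of $F$ and the relation $\tilde I(\xi)=I(F^{-1}(\xi))$ for $\xi\in F(\mathbb D)$. The paper's writeup separates the finite and infinite infimum cases explicitly, but your injectivity bookkeeping covers both cases equally well.
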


\begin{proof}

With the fact that $F^{-1}(K)$ is compact (by Corollary \ref{Closed}) and $F^{-1}(G)$ is open for every open set $G\subset \mathbb D$ and every compact set $K\subset \mathbb D$, and  $\mathbb{P}(Y^\epsilon\in \cdot)=\mathbb{P}(\epsilon L^\epsilon \in F^{-1}(\cdot))$, we apply Theorem \ref{CH-WLDP} to $F^{-1}(G)$ and $F^{-1}(K)$ respectively. We then get
\[
-\inf_{x \in F^{-1}(G)} I(x) \leq \liminf_{\epsilon \to 0} \frac{\log \mathbb{P}(Y^\epsilon \in G)}{\log 1/\epsilon},
\limsup_{\epsilon \to 0} \frac{\log \mathbb{P}(Y^\epsilon \in K)}{\log 1/\epsilon} \leq -\inf_{x \in F^{-1}(K)} I(x).
\]
By the definition of $F$, we know that $F$ preserves the jump times and jump sizes.

The proof is completed once we have proved the following claim that
\begin{equation*}
 \inf_{x\in F^{-1}(U)} I(x) =\inf_{x\in U} \tilde{I}(x),
\end{equation*}
for every measurable set $U$.

Assume that $\inf_{x\in U} \tilde{I}(x)<\infty$. Then, there exists a function $\xi \in U$ such that $F^{-1}(\xi)$ is a step function with finite jumps and starting at 0. For such $\xi$, $F^{-1}(\xi)\in F^{-1}(U)$. As we have seen, $\tilde{I}(\xi)=I(F^{-1}(\xi))$. Then,
\[
\inf_{x\in F^{-1}(U)} I(x)\leq\inf_{x\in U} \tilde{I}(x).
\]

A similar discussion will show that
\[
\inf_{x\in F^{-1}(U)} I(x)\geq\inf_{x\in U} \tilde{I}(x).
\]

If $\inf_{x\in U} \tilde{I}(x)=\infty$, we know that for all $\xi \in F^{-1}(U)$, $I(\xi)=\infty$. Similarly, if $\inf_{x\in F^{-1}(U)} I(x)=\infty$, we know that for all $\xi \in U$ , $\tilde{I}(\xi)=\infty$.

Thus, we have proved our claim and this completes the proof of the theorem.
\end{proof}

\subsection{Nonexistence of strong large deviation principle}\label{nexist}

Consider a mapping $\pi: \mathbb{D}^2 \to \mathbb{R}^2_+$ that maps paths in $\mathbb{D}$ to their largest jump sizes, i.e.
\[
\pi(\xi) \triangleq \left( \sup_{t\in (0,1]} (\xi(t)-\xi(t^-)),\sup_{t\in (0,1]} (\xi(t^-)-\xi(t)) \right).
\]
We know by Rhee et al.~\cite[Section 4.4]{CH19} that $\pi$ is continuous under $d$.

Prove by contraction. Suppose $Y^\epsilon$ satisfies a strong large deviation principle. That means we suppose (\ref{wldp2}) to hold for all closed sets rather than just compact sets.

The random variables $\pi(Y^\epsilon)$ have to satisfy a strong large deviation principle with the rate function
\[
\tilde{I}'(y)=\inf\{\tilde{I}(\xi): \xi \in \mathbb{D}, y=\pi(\xi)\}
\]
by the \emph{Contraction Principle}, as long as $\tilde{I}'$ is proved to be a rate function \cite[Remarks of Theorem 4.2.1]{Demb}.

As $F$ is an injection, $F^{-1}(\xi)$ is a function in $\mathbb{D}$ and $I(F^{-1}(\xi))=\tilde{I}(\xi)$ by taking the measurable set as $\{\xi\}$ for every $\xi \in F(\mathbb{D})$.

For the reason that  $F$ preserves the jump times and jump sizes of the L\'evy process $\epsilon L^\epsilon$, we know by the definition of $\pi $ that
\[
\pi(Y^\epsilon)=\pi(F(\epsilon L^\epsilon))=\pi(\epsilon L^\epsilon).
\]

We then know that  $\pi (\xi)=\pi(F^{-1}(\xi))$ for every $\xi \in F(\mathbb{D})$ due to the fact that $\xi =F(F^{-1}(\xi))$.

Then, we conclude that
\[
\tilde{I}'(y)=\inf\{I(F^{-1}(\xi)): \xi \in \mathbb{D}, y=\pi(\xi)\}=\inf\{I(F^{-1}(\xi)): \xi \in \mathbb{D}, y=\pi(F^{-1}(\xi)\}=I'(y),
\]
where $I'$ is the rate function defined in \cite[Section 4.4]{CH19}. Thus $\tilde{I}'$ indeed is a rate function.

So, $\epsilon L^\epsilon$ should satisfy the strong large deviation principle with the rate function $I'$, which is proved incorrect with a counterexample in \cite[Section 4.4]{CH19}.

\subsection{An example}
In this subsection we will show that the solution process to an SDEs driven by $\alpha$-stable L\'evy process satisfies previous large-deviations estimates.

Let $L_t$ be a symmetric $\alpha$-stable L\'evy process, with $1<\alpha<2$. Then the scaled process $\epsilon L_{t/\epsilon}$ admits the L\'evy-It\^o decomposition that
\begin{equation} \label{LIDeco}
  \epsilon L_{t/\epsilon} = \epsilon \int_0^t \int_{[-1,1] \backslash \{0\}} z \widetilde{N}^{\frac{1}{\epsilon}}(ds,dz)+ \epsilon \int_0^t \int_{\mathbb{R}\backslash [-1,1]} z N^{\frac{1}{\epsilon}}(ds,dz),
\end{equation}
where $N^{\frac{1}{\epsilon}}$ is a Poisson random measure defined on a given complete probability space $(\Omega,\mathcal{F}, \mathbb{P})$ with compensator $\epsilon^{-1}ds \otimes \nu$ and $\widetilde{N}^{\frac{1}{\epsilon}}$ is the corresponding compensated Poisson random measure. The L\'evy measure $\nu$ has the following explicit form
\begin{equation*}
  \nu(dz)=\frac{1}{|z|^{1+\alpha}} dz.
\end{equation*}
It clearly satisfies the condition (\ref{Connu}). We know that
\begin{align*}
 \int_0^t \int_{\mathbb{R}\backslash [-1,1]} z dt \nu(dz) & =\int_0^t \int_1^\infty z dt \nu(dz)-\int_0^t \int_1^\infty z dt \nu(dz) \\
 &  = 0.
\end{align*}
So, we can rewrite (\ref{LIDeco}) as
\begin{equation*}
  \epsilon L_{t/\epsilon} = \epsilon \int_0^t \int_{\mathbb{R} \backslash \{0\}} z \widetilde{N}^{\frac{1}{\epsilon}}(ds,dz).
\end{equation*}

Denote the scaled processes $\epsilon L_{t/\epsilon}$ by $\epsilon L_t^\epsilon$. Consider the following SDE,
\begin{equation*}\left\{
  \begin{aligned}
    dY_t^\epsilon & = b(Y_t^\epsilon)dt+\epsilon d L_t^\epsilon, \\
    Y_0^\epsilon & = 0,
    \end{aligned}\right.
\end{equation*}
where $b$ is a bounded Lipschitz continuous function on $[0,1]$. Applying Theorem \ref{LDP} and Theorem \ref{wLDP}, we know that the solution process $Y^\epsilon$ satisfies the large deviations estimates in Theorem \ref{LDP} and the weak large deviation principle presented in Theorem \ref{wLDP} with speed $\log 1/\epsilon$.

\section{Discussion}\label{Dis}

Based on the results for heavy tailed L\'evy process obtained by Rhee et al.~\cite{CH19}, we have obtained a large deviation theorem (Theorem \ref{LDP}) for a kind of one dimensional stochastic differential equations (SDEs) driven by heavy-tailed L\'evy processes. We have also obtained a weak large deviation principle (Theorem \ref{wLDP}) and shown that the full large deviation principle does not hold for such solution process. We require no exponential integrability on the L\'evy measure of the driven process. As a result of that, we have shown that the solution process to the SDEs driven by a symmetric $\alpha$-stable L\'evy process satisfies a large deviation theorem and the weak large deviation principle with speed $\log 1/\epsilon$. This is the first time, to our knowledge, that a large deviation result is proved for SDEs driven by $\alpha$-stable L\'evy process.

Unlike the large deviation principle for SDEs driven by Brownian motion \cite{FW} and random Poisson measure with exponentially light tail \cite{Budhiraja13}, the convergence rate is determined solely by the jump behavior of the driven process and is in a polynomial rate. The large deviation theorem (Theorem \ref{LDP}) shows a more precise convergence rate that varies with respect to sets in the path space, which shares the same idea as  Wang \cite{WangJ}. Even the SDEs in \cite{Gomes} are driven by a subexponential light process, whose tail is much lighter than our driven processes, the probability still decays much slower than an exponential speed. This agrees with what we have shown in our weak large deviation principle, where the probability decays in a polynomial speed. These may offer us with a powerful tool in investigating the dynamical behaviors of SDEs, such as the most probable transition path.

However, we have only obtained the large deviation theorem for bounded sets in Theorem \ref{LDP}, unlike the result in \cite{CH19} which is for all measurable sets. We are only able to derive results for one-dimensional SDEs with additive noise. But our results may still be useful in determining the most probable transition path, as we only need to consider the probability of a tube which is a bounded set in the path space.

Further research is needed to establish the large deviation result for multiplicative noise and for multidimensional settings. There are very few papers concerning SDEs driven by heavy-tailed processes that take into consideration of these two cases. Very few theoretical works deal with the dynamical behaviors of these two cases.

\section*{Appendix: Proofs to Lemma \ref{Conti}, \ref{bda} and \ref{InConti}}\label{profs}
\renewcommand{\theequation}{A.\arabic{equation}}
\renewcommand{\thefigure}{A.\arabic{figure}}
\setcounter{equation}{0}
\setcounter{figure}{0}
\renewcommand{\thetheorem}{A.\arabic{theorem}}

In this section, we provide the proofs to Lemma \ref{Conti}, \ref{bda} and \ref{InConti}. The following lemma will play a role in these proofs.

\begin{lemma}\label{lem}
  For every function $f$ in $\mathbb{D}$ and every series $\{\lambda_n \}_{n \geq 1}$ of non-decreasing homeomorphisms from $[0,1]$ to $[0,1]$, satisfying  the property that $\| \lambda_n-e\| \to 0$ as $n \to \infty$, we have
  \[
  \limsup_{n \to \infty} \int_0^1|f(\lambda_n(s))-f(s))|ds=0.
  \]
\end{lemma}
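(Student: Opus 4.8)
The plan is to reduce the claim to the density of continuous functions in $L^1[0,1]$ together with the fact that a right-continuous function with left limits has at most countably many discontinuities, hence is Riemann integrable and can be uniformly approximated off a small set by step functions. Concretely, fix $f\in\mathbb D$ and $\epsilon>0$. Since $f$ is bounded, say $\|f\|\le M$, I want a decomposition $f = f_c + f_r$ where $f_c$ is (uniformly) continuous on $[0,1]$ and $\int_0^1 |f_r(s)|\,ds$ is small, or alternatively a direct bound using uniform continuity of $f$ away from its (finitely many large) jumps.

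\textbf{Step 1.} Reduce to the Riemann-integrability of $f$. Because $f\in\mathbb D$, its set of discontinuities is countable, so $f$ is continuous Lebesgue-a.e.\ and, being bounded, is Riemann integrable on $[0,1]$. Consequently there is a step function $g=\sum_{i=1}^{N} c_i\ind_{[t_{i-1},t_i)}$ (with a fixed finite partition $0=t_0<t_1<\cdots<t_N=1$) such that $\int_0^1|f(s)-g(s)|\,ds<\epsilon$ and $\|g\|\le M$.

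\textbf{Step 2.} Control the step function under the time change. For the fixed step function $g$, the composition $g\circ\lambda_n$ is again a step function with the same values $c_i$ but on the intervals $[\lambda_n^{-1}(t_{i-1}),\lambda_n^{-1}(t_i))$. Since $\|\lambda_n-e\|\to 0$, also $\|\lambda_n^{-1}-e\|\to 0$ (the homeomorphisms are monotone on a compact interval, so the inverse moduli are controlled), hence each breakpoint moves by at most $\delta_n\to0$. Therefore $g(\lambda_n(s))\ne g(s)$ only on a union of at most $N$ intervals of total length $\le 2N\delta_n$, on which $|g(\lambda_n(s))-g(s)|\le 2M$. Thus $\int_0^1 |g(\lambda_n(s))-g(s)|\,ds\le 4MN\delta_n\to 0$.

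\textbf{Step 3.} Combine via the triangle inequality and a change of variables. Write
\[
\int_0^1 |f(\lambda_n(s))-f(s)|\,ds \le \int_0^1 |f(\lambda_n(s))-g(\lambda_n(s))|\,ds + \int_0^1 |g(\lambda_n(s))-g(s)|\,ds + \int_0^1 |g(s)-f(s)|\,ds.
\]
The last term is $<\epsilon$ by Step 1; the middle term $\to0$ by Step 2. For the first term, substitute $u=\lambda_n(s)$, so $ds = (\lambda_n^{-1})'(u)\,du$; since $\lambda_n^{-1}$ is a homeomorphism of $[0,1]$ one gets $\int_0^1 |f-g|(\lambda_n(s))\,ds = \int_0^1 |f-g|(u)\,d\lambda_n^{-1}(u)$. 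To avoid assuming $\lambda_n$ is absolutely continuous, I would instead handle this term by yet one more approximation: bound $|f-g|$ by the a.e.-defined nonnegative integrable function $h=|f-g|$ and note $\int_0^1 h(\lambda_n(s))\,ds$ is small \emph{uniformly in $n$} because one can choose $g$ so that $h$ is bounded except on a set of small measure and then the pushforward of Lebesgue measure under $\lambda_n$ of that small set is still small — this is exactly where some care is needed. Taking $\limsup_{n\to\infty}$ and then $\epsilon\downarrow0$ finishes the proof.

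\textbf{Main obstacle.} The delicate point is the first term $\int_0^1 |f(\lambda_n(s))-g(\lambda_n(s))|\,ds$: the time changes $\lambda_n$ need not be absolutely continuous, so one cannot simply change variables and quote $\int_0^1|f-g|$. The clean fix is to run the approximation in the \emph{other} order: first choose a partition fine enough that $f$ oscillates little on each subinterval \emph{except} those containing one of its finitely many jumps exceeding $\epsilon$, use uniform continuity of $f$ restricted to each ``good'' subinterval, and estimate directly that for $n$ large $|f(\lambda_n(s))-f(s)|\le\epsilon$ except on a set of small measure (the union of the good subintervals whose endpoints are near a jump, plus the bad subintervals), on which the integrand is $\le 2\|f\|$. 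This keeps every estimate on the fixed measure space $([0,1],\mathrm{Leb})$ without pushing forward, and $\limsup_n$ then $\epsilon\downarrow 0$ gives the result.
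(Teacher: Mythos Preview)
Your approach is correct once you implement the ``clean fix'' you describe at the end (or, equivalently, refine the step-function approximation so that $|f-g|<\epsilon$ off a finite union of short intervals, and then observe that $\lambda_n^{-1}$ enlarges each such interval by at most $2\|\lambda_n-e\|$). The obstacle you flag is genuine: $\lambda_n$ may be singular with respect to Lebesgue measure even though $\|\lambda_n-e\|\to0$, so a blind change of variables in $\int_0^1|f-g|(\lambda_n(s))\,ds$ is not available; only the structure of the exceptional set (finitely many intervals) rescues it.

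The paper's proof takes a much shorter route that avoids approximation entirely. It uses two facts: (i) a c\`adl\`ag function is continuous at all but countably many points; (ii) the integrand is dominated by $2\|f\|$. From (i), for Lebesgue-a.e.\ $s$ one has $f(\lambda_n(s))\to f(s)$ directly, simply because $\lambda_n(s)\to s$ and $f$ is continuous at $s$. Reverse Fatou (or dominated convergence) then gives
\[
\limsup_{n\to\infty}\int_0^1|f(\lambda_n(s))-f(s)|\,ds\ \le\ \int_0^1\limsup_{n\to\infty}|f(\lambda_n(s))-f(s)|\,ds\ =\ 0.
\]
Your route trades this soft pointwise/DCT argument for harder quantitative estimates; both reach the conclusion, but once one remembers that c\`adl\`ag implies a.e.\ continuity, the paper's argument is essentially two lines and sidesteps the change-of-variables delicacy altogether.
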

\begin{proof}
For every $\epsilon>0$, define a set $A^\epsilon=\{s:|f(s)-f(s-)|>\epsilon \}$. As $f \in \mathbb{D}[0,1]$ , we know $A^\epsilon$ is a finite set and $f$ is a bounded function. Let $h_i=|f(s_i)-f(s_i-)|$, $s_i \in A^\epsilon$.

Then we have
$$
\limsup_{n \to +\infty} |f(\lambda_n(s))-f(s)| \leq f^\epsilon(s),
$$
where
$$
f^\epsilon(s)=\left\{ \begin{array}{ll}h_i, & s=s_i\in A^\epsilon, \\
\epsilon, & \textrm {otherwise.}\end{array}\right.
$$
Then
$$
\int_0^t\limsup_{n \to +\infty} |f(\lambda_n(s))-f(s)| ds\leq \int_0^t f^\epsilon(s)ds=t\epsilon.
$$
By Fatou's Lemma, we have
$$
\limsup\limits_{n\to +\infty} \int_0^t|f(\lambda_n(s))-f(s)|ds \leq \int_0^t\limsup_{n \to +\infty} |f(\lambda_n(s))-f(s)| ds\leq \int_0^tf^\epsilon(s)ds=t\epsilon.
$$
Since $\epsilon$ is arbitrary, we get
$$
\limsup\limits_{n \to \infty} \int_0^1 |f(\lambda_n(s))-f(s))|ds =0.
$$
\end{proof}
\begin{proof}[{\bf \em Proof of Lemma \ref{Conti}}]
  Suppose $g_n \to g $ under Skorokhod metric. Then without loss of generality, we assume that $d(g_n,g)<1/n$. This means there exist non-decreasing homeomorphisms $\{\lambda_n \}_{n \geq 1}$ such that $\| \lambda_n -e\| \vee \|g \circ \lambda_n -g_n \|<1/n$.

  Let $f_n=F(g_n)$ and $f=F(g)$. We will prove that $d(f_n,f) \to 0$ as $n \to \infty$.

  For this purpose, we calculate
  \begin{align}
  (f \circ \lambda_n)(t)-f_n(t)&= \int_0^{\lambda_n(t)} b(f(s))ds-\int_0^t b(f_n(s))ds +g(\lambda_n(t))-g_n(t), \label{calc1} \\
           &=\int_0^t b((f\circ \lambda_n)(s))- b(f_n(s))ds +g(\lambda_n(t))-g_n(t)  \notag \\
           &+ \int_0^{\lambda_n(t)} b(f(s))ds - \int_0^{t} b((f \circ \lambda_n )(s))ds.\label{calc2}
  \end{align}
  Note that,
  \begin{align*}
  |(f \circ \lambda_n)(s)-f(s)| & =\left|\int_0^{\lambda_n(s)} b(f(\tau))ds-\int_0^s b(f(\tau))d\tau+g(\lambda_n(s))-g(s) \right| \\
  &=\left | \int_s^{\lambda_n(s)}b(f(\tau))d\tau+g(\lambda_n(s))-g(s) \right|\\
  & \leq C|\lambda_n(s)-s|+|g(\lambda_n(s))-g(s)|.
  \end{align*}
  By Lemma \ref{lem}, we know that there exists a positive sequence $\{\epsilon_n\}_{n \geq 1}$ which tends to $0$ as $n \to \infty$, such that,
  \[
   \int_0^1|g(\lambda_n(s))-g(s)|\leq \epsilon_n .
  \]
  With this, we obtain the asymptotic result for (\ref{calc2}) as
  \begin{align*}
    \left|\int_0^{\lambda_n(t)} b(f(s))ds - \int_0^{t} b((f \circ \lambda_n )(s))ds \right|&=\left|\int_0^t \left[ b(f(s))-b((f \circ \lambda_n)(s))\right]+\int_t^{\lambda_n(t)} b(f(s))ds\right| \\
    & \leq \int_0^t L(C|\lambda_n(s)-s|+|g(\lambda_n(s))-g(s)|)ds+C|\lambda_n(t)-t| \\
    & \leq 2(L \vee 1)C \sup_{0 \leq s \leq 1}|\lambda_n(s)-s|+L \epsilon_n \\
    & \to 0,
  \end{align*}
where $L>0$ is the Lipschitz constant of $b$ and $C$ is the bound of $b$.
  So, continuing (\ref{calc1}), we have
  \begin{align*}
    |(f \circ \lambda_n)(t)-f_n(t)|&=\left|\int_0^t b((f\circ \lambda_n)(s))- b(f_n(s))ds \right|+|g(\lambda_n(t))-g_n(t) | \\
    &  + \left | \int_0^{\lambda_n(t)} b(f(s))ds - \int_0^{t} b((f \circ \lambda_n )(s))ds \right | \\
    &\leq \int_0^t L|(f\circ \lambda_n)(s)- f_n(s)|ds +1/n+\tilde{\epsilon}_n,
  \end{align*}
  where $\tilde{\epsilon}_n = \frac{2(L \vee 1)C}{n} + L\e_n \to 0$ as $n \to \infty$.
  By Gr\"onwall's inequality, we get
  \begin{align*}
    d(f,f_n) & \leq \| \lambda_n -e\| \vee \|f \circ \lambda_n -f_n \| \\
    & \leq 1/n \vee [ (1/n + \tilde{\epsilon}_n) e^L ],
  \end{align*}
  and this leads to our desired result.

  Thus we have proved the continuity of $F$ under Skorokhod metric.
  \end{proof}

\begin{proof}[{\bf \em Proof of Lemma \ref{InConti}}]
Note that the space $F(\mathbb{D})$ is endowed with the same metric as $\mathbb{D}$.

We first prove that $F^{-1}$ is a continuous mapping.
Suppose that, without loss of generality,  there is a sequence $\{f_n\}_{n\geq1}$ and $f$ in $F(\mathbb{D})$ such that $d(f_n,f) \to 0$ as $n \to \infty$. It means that there exists a series of non-decreasing homeomorphisms $\lambda_n$ satisfying $ \|\lambda_n-e \| \vee \|f_n-f \circ\lambda_n \| \to 0$ as $n \to \infty$.

We know that for $f_n$ and $f$ , there are $g_n$ and $g$ in $\mathbb{D}$ such that
$$
\begin{aligned}
f_n(t) & = \int_0^t b(f_n(s))ds +g_n(t), \\
f(t) & = \int_0^t b(f(s))ds +g(t).
\end{aligned}
$$
To prove the continuity of $F^{-1}$, we only need to prove that $d(g_n,g) \to 0$ as $n \to \infty$.

We know that
$$
\begin{aligned}
g_n(t)-g(\lambda_n(t)) &= f_n(t)-f(\lambda_n(t))-\left[\int_0^t b(f_n(s))ds - \int_0^{\lambda_n(t) } b(f(s)) ds\right] \\
& = f_n(t)-f(\lambda_n(t))-\left[\int_0^t\left( b(f_n(s))-b(f(\lambda_n(s))) \right) ds \right] \\
& - \left[\int_0^t b(f(\lambda_n(s)))ds-\int_0^{\lambda_n(t)} b(f(s)) ds \right].
\end{aligned}
$$
We only need to estimate
$$
\int_0^t b(f(\lambda_n(s)))ds-\int_0^{\lambda_n(t)} b(f(s)) ds.
$$
As is done previously,
$$
\begin{aligned}
\int_0^t b(f(\lambda_n(s)))ds-\int_0^{\lambda_n(t)} b(f(s)) ds & = \int_0^t b(f(\lambda_n(s)))-b(f(s)) ds+\int_{\lambda_n(t)}^t b(f(s))ds,
\end{aligned}
$$
By Lemma \ref{lem}, we know that
$$
\limsup\limits_{n\to +\infty}\int_0^1 |f(\lambda_n(s))-f(s))|ds=0.
$$
With this, we obtain that
$$
\begin{aligned}
\left |\int_0^t b(f(\lambda_n(s)))ds-\int_0^{\lambda_n(t)} b(f(s)) ds \right| & \leq \left|\int_0^t b(f(\lambda_n(s)))-b(f(s)) ds\right|+\left|\int_{\lambda_n(t)}^t b(f(s))ds \right| \\
&\leq L \int_0^1|f(\lambda_n(s))-f(s)|ds+C\|\lambda_n -e \| \\
& \to 0,
\end{aligned}
$$
as $n \to \infty$.

So,
$$
\limsup\limits_{n \to \infty}\left \|\int_0^\cdot b(f(\lambda_n(s)))ds-\int_0^{\lambda_n(\cdot)} b(f(s)) ds \right\|=0.
$$
Then, as $n \to \infty$,
$$
\begin{aligned}
\|g_n-g \circ\lambda_n\| &\leq \|f_n-f\circ\lambda_n\|+\left\|\int_0^\cdot b(f_n(s))-b(f(\lambda_n(s)))ds \right\| \\
& + \left\|\int_0^\cdot b(f(\lambda_n(s)))ds-\int_0^{\lambda_n(\cdot)} b(f(s)) ds \right\| \\
& \leq \|f_n-f\circ\lambda_n\| + L \left\|f_n-f\circ \lambda_n \right\|\\
& +\left\|\int_0^\cdot b(f(\lambda_n(s)))ds-\int_0^{\lambda_n(\cdot)} b(f(s)) ds \right\| \\
& \to 0.
\end{aligned}
$$
Thus, we have proved the continuity of $F^{-1}$ under the Skorokhod metric $d$.

Then, we continue to prove that $F^{-1}$ is a Cauchy continuous mapping.

Proof by contradiction. Suppose there exists a Cauchy sequence $\{f_n\}_{n \geq 1}$ in $(F(\mathbb{D},d)$ with $f_n=F(g_n)$ where $g_n \in \mathbb{D}$ and $\{g_n\}_{n \geq 1}$ is not a Cauchy sequence in $(\mathbb{D},d)$.

    We know that  $\{f_n\}_{n \geq 1}$ is also a Cauchy sequence in $(\mathbb{D},d)$. As $(\mathbb{D},d)$ is a complete metric space, there exists a function $f \in \mathbb{D}$ such that $d(f_n,f) \to 0$ as $n \to \infty$. Then, we have non-decreasing homeomorphisms $\{\lambda_n\}_{n \geq 1}$ to make $\|\lambda_n-e \| \vee \| f_n \circ \lambda_n -f \|<1/n$. A direct calculation gives us
    \[
        \|\lambda_n-e \|=\sup_{s \in [0,1]} |\lambda_n(s)-s| =\sup_{s \in [0,1]} |s-\lambda_n^{-1}(s) |<\frac{1}{n}.
    \]
    Denote $\lambda_{n_1} \circ \lambda_{n_2}^{-1}$ as $\tilde{\lambda}_{n_1,n_2}$. It is easy to verify that $\tilde{\lambda}_{n_1,n_2}$ is also a non-decreasing homeomorphism from $[0,1]$ to $[0,1]$.

    Similarly, we have
   \begin{equation} \label{eq6}
    \|\tilde{\lambda}_{n_1,n_2}-e \|=\|\lambda_{n_1} \circ \lambda_{n_2}^{-1} - e \| =\| \lambda_{n_1} - \lambda_{n_2}\| \leq \|\lambda_{n_1}-e \|+ \| \lambda_{n_2} -e \| < \frac{2}{n_1 \wedge n_2},
   \end{equation}
   and
   \begin{equation} \label{eq1}
    \|f_{n_1} \circ \tilde{\lambda}_{n_1,n_2}-f_{n_2} \|=\|f_{n_1}\circ \lambda_{n_1} \circ \lambda_{n_2}^{-1}- f_{n_2} \|  = \|f_{n_1}\circ \lambda_{n_1} - f_{n_2}\circ \lambda_{n_2} \| <\frac{2}{n_1 \wedge n_2 },
   \end{equation}

   for all $n_1,n_2 \in \mathbb{Z}_+$.

   As $\{g_n\}_{n\geq 1}$ is not a Cauchy sequence, there exists a constant $\delta>0$ such that for all $N >0$, we can find an integer $k>0$ satisfying $d(g_N,g_{N+k})>\delta$. For the homeomorphisms $\lambda_n$, this gives us
   \begin{equation}\label{eq2}
    \delta<d(g_N,g_{N+k}) \leq \|g_N \circ \tilde{\lambda}_{N,N+k}-g_{N+k} \|.
   \end{equation}

   We know,
   \begin{align}
    (g_N \circ \tilde{\lambda}_{N,N+k})(t)-g_{N+k}(t) & = (f_N \circ \tilde{\lambda}_{N,N+k})(t)-f_{N+k}(t) \notag \\
       & - \left[ \int_0^{\tilde{\lambda}_{N,N+k}(t)} b (f_N(s)) ds - \int_0^t b(f_{N+k}(s)) ds \right] \notag \\
       & = (f_N \circ \tilde{\lambda}_{N,N+k})(t)-f_{N+k}(t) \label{eq3}\\
       & -\left[ \int_0^t b ( (f_N \circ \tilde{\lambda}_{N,N+k})(s))-b(f_{N+k}(s)) ds \right] \label{eq4}\\
       & + \left[ \int_0^t b ( (f_N \circ \tilde{\lambda}_{N,N+k})(s))ds -\int_0^{\tilde{\lambda}_{N,N+k}(t)} b(f_N(s)) ds \right].\label{eq5}
   \end{align}

   We claim that for each $\epsilon>0$, there exists an integer $N_1>0$ such that for all integer $n_1>N_1$ and positive integer $k$, we have
   \[
    \sup_{t \in [0,1]}\left|\int_0^t b ((f_{n_1} \circ \tilde{\lambda}_{n_1,n_1+k})(s))ds -\int_0^{\tilde{\lambda}_{n_1,n_1+k}(t)} b(f_{n_1}(s)) ds\right| < \epsilon.
   \]

   By the Lipschitz continuity of $b$, for the  previous $\epsilon$, there is an integer $N_2>0$ such that for all $n_2>N_2$ and positive integer $k$, we have
   \[
    \sup_{t \in [0,1]}\left| \int_0^t b ((f_{n_2} \circ \tilde{\lambda}_{n_2,n_2+k})(s))-b(f_{n_2+k}(s)) ds \right| < \epsilon.
   \]
   Take $\epsilon=\frac{1}{2}(\delta-2/n)$. For each $n>N_1 \vee N_2$, there is a positive integer $k$ to make equation (\ref{eq2}) hold with $n$ in place of $N$. We have from (\ref{eq3}-\ref{eq5}) that
   \[
    \delta<\|g_n \circ \tilde{\lambda}_{n,n+k}-g_{n+k} \| \leq \|f_n  \circ \tilde{\lambda}_{n,n+k}-f_{n+k} \|+2\e = \|f_n  \circ \tilde{\lambda}_{n,n+k}-f_{n+k} \|+\delta-2/n.
   \]
   This gives us
   \[
    2/n<\|f_n  \circ \tilde{\lambda}_{n,n+k}-f_{n+k} \|,
   \]
   which contradicts with inequality (\ref{eq1}).

   Now we proceed to prove our claim by showing that for every $k \in \mathbb{Z}_+$,
   \begin{align*}
    \sup_{t \in [0,1]}\left| \int_0^t b ((f_{n} \right. & \left. \circ \tilde{\lambda}_{n,n+k})(s))ds - \int_0^{\tilde{\lambda}_{n,n+k}(t)} b(f_{n}(s)) ds \right| \\
    & \leq \sup_{t \in [0,1]}\left| \int_0^t b((f_{n} \circ \tilde{\lambda}_{n,n+k})(s))-b(f_{n}(s))ds \right|+ C \|\tilde{\lambda}_{n,n+k}-e \| \to 0,
   \end{align*}
   as $n \to \infty$.

   We calculate for $s\in[0,1]$ that
   \begin{align*}
    \left| (f_n\circ \tilde{\lambda}_{n,n+k})(s)-f_n(s) \right| & =\left|(f_{n} \circ \lambda_{n} \circ \lambda_{n+k}^{-1})(s)-f_{n}(s)\right| \\
    & \leq \left|(f_{n} \circ \lambda_{n} \circ \lambda_{n+k}^{-1})(s)-f(\lambda_{n+k}^{-1}(s)) \right|+\left|f(\lambda_{n+k}^{-1}(s)) - f (s)\right|\\
    &+ \left|f(\lambda_{n}^{-1}(s)) - f (s)\right|+\left|f(\lambda_{n}^{-1}(s)) - f_{n} (s)\right|.
   \end{align*}
   As $ \|f_{n} \circ \lambda_{n} -f \| < 1/n$ and $\|\lambda_n^{-1}-e\|=\|\lambda_n-e \|<1/n$, we know
   \begin{align*}
    \sup_{s \in [0,1]} \left|(f_{n} \circ \lambda_{n} \circ \lambda_{n+k}^{-1})(s)-f(\lambda_{n+k}^{-1}(s)) \right| & = \sup_{s \in [0,1]} \left|f(\lambda_{n}^{-1}(s)) - f_{n} (s)\right|\\
    &  = \|f_{n} \circ \lambda_{n} -f \| < \frac{1}{n}.
   \end{align*}
   And for all continuous point $s\in[0,1]$ of $f$ and hence for Lebesgue-almost all $s\in[0,1]$,
   \begin{equation*}
    \left|f(\lambda_{n+k}^{-1}(s)) - f (s)\right| \leq \left|f(\lambda_{n}^{-1}(s)) - f (s) \right| \to 0 \quad \textrm{as } n \to \infty.
   \end{equation*}
   Then, we can deduce that $| (f_n\circ \tilde{\lambda}_{n,n+k})(s)-f_{n}(s)|$ converges to $0$ for Lebesgue-a.e. $s\in[0,1]$ and all $k \in \mathbb{Z}_+$, as $n \to \infty$. Since $b$ is a bounded Lipschitz continuous function, we have from \emph{Dominant Convergence Theorem } that
   \begin{equation} \label{eq7}
    \sup_{t \in [0,1]}\left| \int_0^t b((f_{n} \circ \tilde{\lambda}_{n,n+k})(s))-b(f_{n}(s))ds \right| \leq \int_0^1 \left|b((f_{n} \circ \tilde{\lambda}_{n,n+k})(s))-b(f_{n}(s)) \right| ds \to 0,
   \end{equation}
   as $n \to \infty$.
   A direct calculation gives us
   \begin{align*}
    \left |\int_0^t b ((f_{n} \right. & \left. \circ \tilde{\lambda}_{n,n+k})(s))ds - \int_0^{\tilde{\lambda}_{n,n+k}(t)} b(f_{n}(s)) ds \right| \\
    & \leq \left| \int_0^t b((f_{n} \circ \tilde{\lambda}_{n,n+k})(s))-b(f_{n}(s))ds \right|+ \left| \int_t^{\tilde{\lambda}_{n,n+k})(t)} b(f_n(s))ds \right|.
   \end{align*}
    With the fact that $b$ is a bounded function, inequality (\ref{eq6}) and  (\ref{eq7}), we have proved our claim.
\end{proof}

\begin{proof}[{\bf \em Proof of Lemma \ref{bda}}]
  As is stated in the lemma, we assume that the set $A$ is bounded by a positive constant $M$, by which we mean $\sup_{x\in A} d(x,\mathbf 0) \le M$, where $\mathbf 0 $ is the zero function. We first prove that $F^{-1}(A)$ is bounded away from $\mathbb{D}_{< j,k}$.

  Proof by contradiction. Suppose there exist two sequences $\{\xi_n\}_{n\geq 1} \subset F^{-1}(A) $ and $ \{\eta_n\}_{n\geq 1} \subset \mathbb{D}_{<j,k} $ such that  $d(\xi_n,\eta_n)<\frac{1}{n}$ for all $n \geq 1$. So, there also exists a sequence of non-decreasing homeomorphism $\{\lambda_n\}_{n\geq 1}$, and $\|\lambda_n-e\|\vee \|\xi_n-\eta_n\circ\lambda_n\|<\frac{1}{n}$.

  As $\{F(\xi_n)\}_{n\geq 1} \subseteq A$ is bounded by $M$, by the definition of $F$, which is
  \[
  F(\xi_n)(t)=\int_0^tb(F(\xi_n)(s))ds+\xi_n(t),
   \]
  we have
  \begin{align*}
  d(\xi_n,\mathbf{0})&\leq d(\xi_n,F(\xi_n))+d(F(\xi_n),\mathbf{0}) \\
  &\leq \|\int_0^\cdot b(F(\xi_n)(s))ds \|+M \\
  & \leq C+M,
  \end{align*}
  and
  \begin{equation}\label{bound}
  d(\eta_n,\mathbf{0})\leq d(\eta_n,\xi_n)+d(\xi_n,\mathbf{0})\leq 1+C+M.
  \end{equation}
  These mean that $\{\xi_n\}_{n\geq 1}$ and $ \{\eta_n\}_{n\geq 1}$ are also bounded by $C+M$ and $1+C+M$ respectively.

  Next, we will show that $d(F(\xi_n),F(\eta_n))\to 0$ as $n \to \infty$.

  For the same $\{\lambda_n\}_{n \geq 1}$, we have
  \begin{align*}
   |F(\xi_n)(t)-F(\eta_n)\circ \lambda_n(t)| & \leq \left|\int_0^{\lambda_n(t)}b(F(\eta_n)(s))ds-\int_0^tb(F(\xi_n)(s))ds\right|+|\eta_n \circ \lambda_n(t)-\xi_n(t)|\\
   & \leq \int_0^tL|F(\xi_n)(s)-F(\eta_n)\circ \lambda_n(s)|ds+|\eta_n \circ \lambda_n(t)-\xi_n(t)| \\
   &+\left|\int_0^{\lambda_n(t)}b(F(\eta_n)(s))ds -\int_0^t b(F(\eta_n) \circ \lambda_n(s))ds \right| .
  \end{align*}
  As $\|\eta_n \circ \lambda_n-\xi_n\|<\frac{1}{n}$, we only need to estimate the last term, which is
  \begin{align*}
   \left|\int_0^{\lambda_n(t)}b(F(\eta_n)(s))ds -\int_0^t b(F(\eta_n) \circ \lambda_n(s))ds \right| & \leq \int_0^t| b(F(\eta_n)(s))ds - b(F(\eta_n) \circ \lambda_n(s)) |ds \\
   & +\int_t^{\lambda_n(t)}|b(F(\eta_n)(s))|ds \\
   & \leq \int_0^tL|F(\eta_n)(s)-F(\eta_n) \circ \lambda_n(s)|ds +C\|\lambda_n-e \|.
  \end{align*}

  We claim that
   \[
    \int_0^tL|F(\eta_n)(s)-F(\eta_n) \circ \lambda_n(s)|ds \leq \widetilde{C} \|\lambda_n-e\|,
   \]
  where $\widetilde{C}=LC+4(1+C+M)[(\alpha-1)j+(\beta-1)k]$.

  With this, we can deduce that
  \begin{align*}
   |F(\xi_n)(t)-F(\eta_n)\circ \lambda_n(t)| & \leq \int_0^tL|F(\xi_n)(s)-F(\eta_n)\circ \lambda_n(s)|ds+\|\eta_n \circ \lambda_n-\xi_n\|\\
   &+(C+\widetilde{C})\|\lambda_n-e\| \\
   &\leq \int_0^tL|F(\xi_n)(s)-F(\eta_n)\circ \lambda_n(s)|ds+\frac{(1+C+\widetilde{C})}{n}.
   \end{align*}
  By Gr\"onwall's Lemma, we have
   \[
    |F(\xi_n)(t)-F(\eta_n)\circ \lambda_n(t)| \leq \frac{(1+C+\widetilde{C})}{n} \exp(Lt).
   \]
  As $n \to \infty$, $d(F(\xi_n),F(\eta_n))\to 0$, which contradicts with the assumption that $A$ is bounded away from $\mathbb{G}_{<j,k}$.

  We now continue to verify our claim. Note that
  \begin{align*}
   |F(\eta_n(s))-F(\eta_n) \circ\lambda_n(s)| & \leq \left|\int_{\lambda_n(s)}^sb(F(\eta_n)(\tau))d\tau \right|+|\eta_n(s)-\eta_n \circ \lambda_n(s)|\\
   & \leq C\|\lambda_n-e\|+|\eta_n(s)-\eta_n \circ \lambda_n(s)|,
  \end{align*}
  and
   \[
    |\eta_n(s)-\eta_n \circ \lambda_n(s)| \leq \zeta_n(s),
   \]
  where
   \[
    \zeta_n=
    \begin{cases}
     2(1+C+M), &   s\in \bigcup_{i}\big[0\vee(s_i^{(n)}-\|\lambda_n-e\|), (s_i^{(n)}+\|\lambda_n-e\|)\wedge1 \big], \\
    0, & \textrm{otherwise,}
    \end{cases}
   \]
   and $s_i^{(n)}$'s are the jump points of $\eta_n$. This is because $\eta_n$ is a step function with at most $((\alpha-1)j+(\beta-1)k)$ jumps, and  the jump heights of $\eta_n$ are bounded by $2(1+C+M)$ due to (\ref{bound}).

  Thus,
  \begin{align*}
    \int_0^tL|F(\eta_n)(s)-F(\eta_n) \circ \lambda_n(s)|ds &\leq \int_0^t LC\|\lambda_n-e\|ds+\zeta_n(s) ds \\
    & \leq LC\|\lambda_n-e\|+4(1+C+M)[(\alpha-1)j+(\beta-1)k]\|\lambda_n-e\| \\
   &=\widetilde{C}\|\lambda_n-e\|.
  \end{align*}

  Replacing $\mathbb{D}_{<j,k}$ by $\mathbb{D}_{<j,k} \cup \mathbb{D}_{j,k}$ and $\mathbb{G}_{<j,k}$ by $\overline{F(\mathbb{D}_{<j,k}\cup\mathbb{D}_{j,k})}$, we can deduce the second statement with exactly the same proof and it is omitted here.
\end{proof}

\paragraph{Acknowledgements.}
The research of W. Wei was partly supported by the NSFC grant 11771449. The research of Q. Huang is supported by FCT, Portugal, project PTDC/MAT-STA/28812/2017. We would like to thank Shenglan Yuan for helpful discussions.

\end{document}